\newenvironment{proof}[1][Proof]{\textbf{#1.} }{\ \rule{0.5em}{0.5em}}
\newcommand{\R}{I\!\!R}
\newtheorem{theorem}{Theorem}[section]
\newtheorem{corollary}[theorem]{Corollary}
\newtheorem{proposition}[theorem]{Proposition}
\newtheorem{remark}[theorem]{Remark}
\newcommand{\Frac}[2] {\frac{\textstyle #1} {\textstyle #2}}
 \newcommand{\Max}  {\mathop{\rm Max}}
\begin{document}

\title{Stabilized Times Schemes for  High Accurate Finite Differences Solutions of Nonlinear Parabolic Equations}
\maketitle
\centerline{\scshape Matthieu Brachet$^{1}$ and Jean-Paul Chehab$^2$ 
}
\medskip
\centerline{$^{1}${\footnotesize Institut Elie Cartan de Lorraine, Universit\'e de Lorraine,  }} 
 \centerline{{\footnotesize Site de Metz, B\^at. A Ile du Saucy, F-57045 Metz Cedex 1 }}
{
\centerline{$^{2}${\footnotesize Laboratoire Amienois de Math\'ematiques Fondamentales et Appliqu\'ees (LAMFA), {\small UMR} 7352}}
  \centerline{{\footnotesize Universit\'e de Picardie Jules Verne, 33 rue Saint Leu, 80039 Amiens France} }

\begin{abstract}
The Residual Smooting Scheme (RSS) have been introduced in \cite{AverbuchCohenIsraeli} as a backward Euler's method with a simplified implicit part for the solution of parabolic problems. RSS have stability properties comparable to those of  semi-implicit schemes while giving possibilities for reducing  the computational cost. A similar approach was introduced independently in \cite{BCostaPHD,CDGT} but from the Fourier point of view. We present here
a unified  framework for these schemes and propose practical  implementations and extensions of the RSS schemes for the long time simulation of nonlinear parabolic problems when discretized by using high order finite differences compact schemes. Stability results are presented in the linear and the nonlinear case. Numerical simulations of 2D incompressible Navier-Stokes equations are given for illustrating the robustness of the method.
\end{abstract}
{\small 
{\bf Keywords:} {Preconditioning, Stability, Finite Differences, Compact Schemes, Times Schemes, Navier-Stokes Equations}\\
\hskip 0.2in{\bf  AMS Classification}[2010]: {65F08,65M06,65M12,76D05}}\\
\section{Introduction}
It is a common fact in numerical analysis that the choice of a time
marching scheme must balance stability, accuracy and reasonable computational
cost. Typically, when considering e.g. the numerical solution of a space-discretized
parabolic problems, such as
\begin{equation}
\begin{array}{l}
\Frac{du}{dt} +Au=f,\\
u(0)=u_0,
\end{array}
\end{equation}
where $A$ denotes the stiffness matrix, it is well known that  the implicit schemes are stable but need an
additional problem to be solved at each step while the explicit
schemes are very cheap but suffer of a hard time step limitation
making them bad suited for capturing the long time behavior of the
solutions. An ideal scheme should combine stability and low
computational cost (explicity) for a comparable accuracy.\\

Two independent attempts have been made successfully in that
direction :

First, A. Cohen and {\it al}  proposed in 
\cite{AverbuchCohenIsraeli} the following stabilization to the
forward Euler scheme (Residual Smoothing Scheme or RSS) for the discretized parabolic equations associated to homogeneous Dirichlet (or Neumann) Boundary conditions:
\begin{equation}
\begin{array}{lcl}
\Frac{u^{(k+1)}-u^{(k)}}{\Delta t}+&\tau
\underbrace{B(u^{(k+1)}-u^{(k)})}&+Au^{(k)}=f,\\
 & \mbox{Stabilization term}& \\
 \end{array}
\end{equation}
where $\tau$ is a positive real number to be chosen and $B$ a preconditioner of the stiffness matrix $A$.
Originally introduced in the context of wavelet discretizations, the
matrix $B$ can be taken as the diagonal part of $A$ and is then an inconditional preconditioner: the new scheme
is no more expensive than the classical forward Euler's while the
stability is increased. However, RSS is only first order accurate in
time and, in order to increase the accuracy, it was proposed in
\cite{AverbuchCohenIsraeli} to apply a Richardson extrapolation; typically a second order of accuracy was
obtained as shown by numerical evidences. A rough analysis of the
stabilized and extrapolated scheme was made by Ribot and Schatzman
\cite{MRibotMSchatzman,RibotSchatzman2}, they derived stability and error estimates in energy norms .

Independently, Costa, Dettori, Gottlieb and Temam have introduced in
\cite{BCostaPHD,CDGT} a similar approach but starting from a
Fourier-analysis point of view, in the context of multiresolution methods of
nonlinear Galerkin type for spectral discretizations (Fourier,
Chebyshev), see \cite{TemamIDS}. They proposed to stabilize the forward Euler scheme for
the heat equation
$$
u_t -u_{xx}=f,
$$
by adding a stabilization term of the form $\beta\Frac{du}{dt}$. The
new scheme writes in Fourier basis as
\begin{equation}
\begin{array}{lcl}
\Frac{{\hat u}^{(k+1)}_m-{\hat u}^{(k)}_m}{\Delta t}& +\underbrace{\Frac{\beta_j}{\Delta t}({\hat u}^{(k+1)}_m-{\hat u}^{(k)}_m)}
&=-m^2{\hat u}^{(k)}_m + {\hat f}_m, \ m=N_{j-1}, \cdots, N_{j},\\
 & \mbox{Stabilization term}& \\
 \end{array}
\end{equation}
where we have decomposed the frequency range $[1,N]$ into $\displaystyle{\cup_{j=1}^{d}[N_{j-1},N_{j}]}$. In other words
$$
\Frac{{\hat u}^{(k+1)}-{\hat u}^{(k)}}{\Delta t} -B({\hat u}^{(k+1)}-{\hat u}^{(k)})
=-D{\hat u}^{(k)} + {\hat f}, \
$$
with $D=diag(1,4,9, \cdots N^2)$ and 
$$
B=
\left(
\begin{array}{llll}
B_1 & 0 & & 0\\
0 & \ddots &  & \\
 & & \ddots &   \\
 0 & &  & B_d\\
\end{array}
\right),
$$
with $B_j=\beta_j Id_j$, $Id_j$ being the identity matrix of size $(N_{j}-N_{j-1}) \times (N_{j}- N_{j-1})$ (we have set $N_0=0$ and $N_d=N$);
$B$ is then a preconditioner of $D$ in the Fourier space. In the linear case, these two approaches coincide. Of course, same framework can be derived when considering orthogonal polynomials.
Costa and Chehab \cite{ChehabCosta1,ChehabCosta2} have
extended this scheme to hierarchical discretizations in finite
differences.\\

The main advantage of the RSS approach is that a simplified (yet costless) solver is used for the implicit part of the time marching scheme while displaying comparable stability properties to Backward's Euler Scheme. One situation of particular interest, on which we focus in the present work, occurs when handling high order discretizations of the stiffness matrix $A$, e.g. with  finite differences compact schemes. In that case,  $A$ is full, this is due to the implicit part of the scheme. Hence, matrix-vector product are costlty and must be reduced as far as possible.  A lower level of space dicretization (say second order) generates a sparse stiffness matrix $B$ which is a natural efficient preconditioner of $A$.  Then, the RSS scheme can be implemented efficiently taking advantage of the existing  (sometimes fast) solvers of the system of the form 
$$
(Id +\Delta t B) u=f,
$$
such as sparse factorizations and FFT.
\\
The RSS approach can be proposed to solve fully discretized time dependent PDEs with high accurate spatial discretization, with compact scheme, while using the computational facilities of the sparse numerical solvers (Fast solvers, limited memory).\\

In this article, we propose a unified approach to RRS-like schemes that rely \cite{AverbuchCohenIsraeli} and \cite{CDGT}. We derive stability results in the linear and the nonlinear case;  we also present practical and efficient adaptations for the high accurate finite differences solutions of nonlinear parabolic equations.

The paper is organized as follows:
in Section 2,  we derive a general approach to RSS schemes and we give stability results in the linear and the nonlinear case,  the accuracy in time of the new scheme is also discuted. After that, in section 3, we describe the compact scheme discretization and the preconditioning that will be used. Section 4 is devoted to numerical illustrations, we compare RSS approach to the classical one with emphasis on the stability, the accuracy (particularly the dynamics to steady state) and for that purspose, we solve high accurate finite difference steady state of 2D incompressible Navier-Stokes equations (Lid driven cavity) and recover the results of the litterature.
\section{Derivation of the stabilized schemes - properties}
\subsection{Formal Derivation of the stabilized schemes}
Let us consider the finite dimensional differential system
\begin{eqnarray}
\Frac{du}{dt}+F(u)=0, t>0,\\
u(0)=u_0,
\end{eqnarray}
here $F : \R^N\rightarrow \R^N$ is a regular map. The backward Euler
scheme applied to the above system generates the iterations
$$
u^{(k+1)}-u^{(k)}+\Delta tF(u^{(k+1)})=0,
$$
and a (possibly) nonlinear problem must be solved at each step. Making
the approximation
$$
F(u^{(k+1)})\simeq F(u^{(k)}) +F'(u^{(k)})(u^{(k+1)}-u^{(k)}),
$$
where $F'(u^{(k)})$ denotes the differential of $F$ at $u^{(k)}$, we obtain the scheme
$$
\Frac{u^{(k+1)}-u^{(k)}}{\Delta t} + F'(u^{(k)})(u^{(k+1)}-u^{(k)})
+F(u^{(k)})=0,
$$
so
$$
u^{(k+1)}=u^{(k)}-\Delta t(Id +\Delta tF'(u^{(k)}))^{-1}F(u^{(k)}).
$$
Setting $\Phi(v)=v-u^{(k)}+\Delta tF(v)$, we see that $u^{(k+1)}$
is nothing else but the first iteration of the Newton-Raphson scheme
applied to $\Phi(v)$ when starting from the initial guess $u^{(k)}$.

Now, if we replace  $F'(u^{(k)})$ by a preconditioner $\tau B_k$, we
find
\begin{equation}
\begin{array}{lll}
\Frac{u^{(k+1)}-u^{(k)}}{\Delta t}+&\tau
\underbrace{B_k(u^{(k+1)}-u^{(k)})}&+F(u^{(k)})=0,\\
&\mbox{Global stabilization}& \\
\end{array}
\label{NLRSS1}
\end{equation}
and $u^{(k+1)}$ is thus the first iteration of a quasi Newton Method
applied to $\Phi(v)$ when starting from the initial guess $u^{(k)}$.\\

The efficiency of this stabilized scheme is closely related to the
cost of the computation of the preconditioner of the jacobian matrix which
changes at each iteration: technique of existing updating 
factorizations as those presented in \cite{CalgaroChehabSaad} and \cite {Bellavia} could
be adapted.\\

In the present work, we will not discuss on the analysis of the nonlinear version of the scheme, say (\ref{NLRSS}), but we will present on Section 4 numerical results obtained with this scheme. We will rather consider the semi linear approach: if $F(u)$ can be expressed as $F(u)=Au+f(u)$, we define teh scheme
\begin{equation}
\begin{array}{lll}
\Frac{u^{(k+1)}-u^{(k)}}{\Delta t}+&\tau
\underbrace{B(u^{(k+1)}-u^{(k)})}&+F(u^{(k)})=0,\\
&\mbox{Stabilization of the linear part}&\\
\end{array}
\label{NLRSS}
\end{equation}
where $B$ is a preconditioner of $A$\\

It is important to point out that (\ref{NLRSS}) is consistant with the computation of steady states and can then be 
applied as a pseudo-time numerical solver, as illustrated in Section 4 with impressible NSE.\\

Of course this stabilization approach applies to the linear case $(f(u)=0)$. Particularly, 
RSS is a simplified $\theta$-scheme in which the matrix $A$ is replaced by a
preconditioner. Indeed, the $\theta$-scheme write, after
simplifications as
$$
u^{(k+1)}=u^{(k+1)}-\Delta t(Id +\theta\Delta t A)^{-1}(Au^{(k)}-f)
$$
and, substituting $A$ by $B$ in the implicit part, we recover the RSS
$$
u^{(k+1)}=u^{(k+1)}-\Delta t(Id +\theta\Delta t B)^{-1}(Au^{(k)}-f)
$$
with $\theta=\tau$.\\

In practice, the use of a preconditioner $B$ of $A$ leads to propose
$K=Id +\tau \Delta t B$ as preconditioner of $M=Id +\Delta t A$, where $Id$ is the identity matrix. This can be realized
in many ways, e.g., by computing $K$ as an incomplete factorization of $M$; in some cases it can be done by solving the linear systems involving $K$ with fast solvers (FFT or so), see section 3. The RSS approach applies also to linear problems with a matrix $A(t)$ which depends on time $t$: 
\begin{equation}
\Frac{\partial u}{\partial t} +A(t)u=f,
\end{equation}
that we discretize as
\begin{equation}
\begin{array}{cll}
\underbrace{(Id +\tau \Delta t B_k)}&(u^{(k+1)}-u^{(k)})&=F-A(k \Delta t)u^{(k).}\\
\mbox{$K_k$} & & 
\end{array}
\label{TimeDepRSS1}
\end{equation}
The matrix $K_k$ can be computed as an incomplete LU factorization of $M=Id +\Delta t A(k \Delta t)$ and, if $A(t)$ does vary slightly with $t$, incremental factorization updates from $K_{k-1}$ can be done following the techniques proposed in \cite{CalgaroChehabSaad}.
Notice also that scheme (\ref{TimeDepRSS}) can be obtained by applying RSS to linearized equation, as
\begin{equation}
\begin{array}{cll}
\underbrace{(Id +\tau \Delta t B_k)}&(u^{(k+1)}-u^{(k)})&=-\Delta t F(u^{(k)}),\\
\mbox{$K_k$} & & 
\end{array}
\label{TimeDepRSS}
\end{equation}
where $B_k$ is here such that $F(u^k)=B_k u^k$.
\subsection{Properties of the schemes}
\subsubsection{The linear case}
Let $A$ and $B$ be both $N\times N$  real symmetric positive definite matrices; the symmetry of $A$ is considered for the sake of simplicity however  the following approach remains valuable in the nonsymmetric case, see section 3 and Theorem \ref{theo_gen_stab}, . We assume that there exist two strictly positive real numbers $\alpha$ and $\beta$ such that
$$
{\cal H}\hskip 2.cm  \alpha <Bu,u>\le <Au,u> \le \beta <Bu,u>, \ \forall u \in {\mathbb R}^N.
$$
It is important to note that $\alpha$ and $\beta$ can depend on the dimension $N$, if not the matrix $B$ is said to be an inconditional preconditioner of $A$.
We will use the following notations: $<.,.>$ is the euclidian scalar product in ${\mathbb R}^N$ and 
$\parallel .\parallel$, the associated norm.
We will note $\lambda_{min}$ (resp. $\lambda_{max}$) the lowest (resp. the largest) eigenvalue of $A$.\\

We now consider the RSS scheme applied to the discretized heat equation
$$
 \Frac{u^{(k+1)}-u^{(k)}}{\Delta t} +\tau B (u^{(k+1)}-u^{(k)}) =-Au^{(k)}.
 $$
We first prove a simple stability result:
\begin{proposition}
Under hypothesis ${\cal H}$, we have the following stability conditions:
\begin{itemize}
\item If $\tau\ge \Frac{\beta}{2}$, the scheme is unconditionally stable (i.e. stable $\forall \ \Delta t >0$)
\item If $\tau < \Frac{\beta}{2}$, then the scheme is stable for
$
0<\Delta t < \Frac{2}{\left(1-\Frac{2\tau}{\beta}\right)\rho(A)}.
$
\end{itemize} 
\label{RSS_Stab_lin}
\end{proposition}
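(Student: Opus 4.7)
The plan is to derive a discrete energy identity by testing the scheme against the increment $\delta^{(k)}:=u^{(k+1)}-u^{(k)}$, and then control the sign of the resulting energy difference using the spectral equivalence $\mathcal{H}$. The natural energy here is $E^{(k)}=\langle Au^{(k)},u^{(k)}\rangle$, since $A$ is symmetric positive definite, and "stability" is taken to mean $E^{(k+1)}\le E^{(k)}$ for every $k$.

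First I would rewrite the scheme as
\[
\delta^{(k)}+\tau\Delta t\, B\delta^{(k)}+\Delta t\, A u^{(k)}=0,
\]
take the Euclidean scalar product with $\delta^{(k)}$, and divide by $\Delta t$. Then I would use the standard polarization identity
\[
\langle Au^{(k)},\delta^{(k)}\rangle=\tfrac12\bigl(E^{(k+1)}-E^{(k)}\bigr)-\tfrac12\langle A\delta^{(k)},\delta^{(k)}\rangle,
\]
valid because $A=A^\top$. Substituting gives the key identity
\[
E^{(k+1)}-E^{(k)}=-\tfrac{2}{\Delta t}\|\delta^{(k)}\|^{2}-2\tau\langle B\delta^{(k)},\delta^{(k)}\rangle+\langle A\delta^{(k)},\delta^{(k)}\rangle,
\]
so that $E^{(k+1)}\le E^{(k)}$ reduces to establishing
\[
\langle A\delta^{(k)},\delta^{(k)}\rangle\;\le\;\tfrac{2}{\Delta t}\|\delta^{(k)}\|^{2}+2\tau\langle B\delta^{(k)},\delta^{(k)}\rangle.
\]

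For the first case $\tau\ge\beta/2$, the right–hand side already dominates the left just from the preconditioning inequality $\langle A\delta,\delta\rangle\le\beta\langle B\delta,\delta\rangle\le 2\tau\langle B\delta,\delta\rangle$ in $\mathcal{H}$, uniformly in $\Delta t$; this gives unconditional stability. For $\tau<\beta/2$, I would combine both sides of $\mathcal{H}$: the lower bound yields $2\tau\langle B\delta,\delta\rangle\ge\tfrac{2\tau}{\beta}\langle A\delta,\delta\rangle$, so the residual to control is $\bigl(1-\tfrac{2\tau}{\beta}\bigr)\langle A\delta,\delta\rangle$, which by $\langle A\delta,\delta\rangle\le\rho(A)\|\delta\|^{2}$ is bounded by $\bigl(1-\tfrac{2\tau}{\beta}\bigr)\rho(A)\|\delta\|^{2}$. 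Demanding this be $\le\tfrac{2}{\Delta t}\|\delta\|^{2}$ recovers precisely the stated CFL condition $\Delta t<2\bigl[(1-2\tau/\beta)\rho(A)\bigr]^{-1}$.

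The main obstacle is a conceptual one rather than a computational one: $A$ and $B$ need not commute, so one cannot diagonalize both simultaneously and a spectral-radius argument on the amplification matrix $\mathrm{Id}-\Delta t(\mathrm{Id}+\tau\Delta t B)^{-1}A$ is awkward. Choosing to work with the energy $\|u\|_A^{2}$ and testing against $\delta^{(k)}$ is what sidesteps this, since only the \emph{quadratic forms} of $A$ and $B$ appear and these are directly comparable via $\mathcal{H}$. One minor point to mention is that $E^{(k)}$ defines a genuine norm (because $A$ is SPD), so monotone decay of $E^{(k)}$ implies uniform boundedness of $\{u^{(k)}\}$, which is the desired stability statement.
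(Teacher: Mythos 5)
Your proof is correct and takes essentially the same route as the paper: you test the scheme against the increment $u^{(k+1)}-u^{(k)}$, expand $\langle Au^{(k)},u^{(k+1)}-u^{(k)}\rangle$ via the polarization identity for the energy $\langle Au,u\rangle$, and bound $\tau\langle B\delta,\delta\rangle-\tfrac{1}{2}\langle A\delta,\delta\rangle$ from below using $\mathcal{H}$ together with $\langle A\delta,\delta\rangle\le\rho(A)\|\delta\|^{2}$. The case split and the resulting time-step restriction coincide exactly with the paper's argument, so nothing further is needed.
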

\begin{proof}
Taking the usual scalar product of each terms with $u^{(k+1)}-u^{(k)}$, we find
$$
\Frac{1}{\Delta t} \parallel u^{(k+1)}-u^{(k)} \parallel^2
+\tau <B(u^{(k+1)}-u^{(k)}),u^{(k+1)}-u^{(k)}> = -<Au^{(k)},u^{(k+1)}-u^{(k)}>.
$$
Using the parallelogram identity, 
$$
<Au^{(k)},u^{(k+1)}-u^{(k)}>=\Frac{1}{2}\left( <Au^{(k+1)},u^{(k+1)}>-<A(u^{(k)},u^{(k)}+<A(u^{(k+1)}-u^{(k)}),u^{(k+1)}-u^{(k)})>\right),
$$
we infer
$$
\begin{array}{ll}
\Frac{1}{\Delta t} \parallel u^{(k+1)}-u^{(k)} \parallel^2
+\tau <B(u^{(k+1)}-u^{(k)}),u^{(k+1)}-u^{(k)}> & \\
&\\
-\Frac{1}{2}\left(<Au^{(k)},u^{(k)}> -<Au^{(k+1)},u^{(k+1)}>+
<A(u^{(k+1)}-u^{(k)}),u^{(k+1)}-u^{(k)}>\right)&=0.\\
\end{array}
$$
Hence the stability condition $<Au^{(k)},u^{(k)}> \ >  \ <Au^{(k+1)},u^{(k+1)}>$ holds when
$$
\Frac{1}{\Delta t} \parallel u^{(k+1)}-u^{(k)} \parallel^2
+\tau <B(u^{(k+1)}-u^{(k)}),u^{(k+1)}-u^{(k)}> 
-\Frac{1}{2}
<A(u^{(k+1)}-u^{(k)}),u^{(k+1)}-u^{(k)}>>0.
$$
We have, using ${\cal H}$
$$
\begin{array}{c}
\tau <B(u^{(k+1)}-u^{(k)}),u^{(k+1)}-u^{(k)}> 
-\Frac{1}{2}
<A(u^{(k+1)}-u^{(k)}),u^{(k+1)}-u^{(k)}>\\
\ge \\
 \left(\Frac{\tau}{\beta}-\Frac{1}{2}\right) <A(u^{(k+1)}-u^{(k)}),u^{(k+1)}-u^{(k)}> (\ge 0).
 \end{array}
$$
A sufficient stability condition is then
$$
\Frac{1}{\Delta t} \parallel u^{(k+1)}-u^{(k)} \parallel^2
+ \left(\Frac{\tau}{\beta}-\Frac{1}{2}\right) <A(u^{(k+1)}-u^{(k)}),u^{(k+1)}-u^{(k)}> >0.
$$
This is satisfied once $\Frac{1}{\Delta t}+\lambda_{min}(\Frac{\tau}{\beta}-\Frac{1}{2})>0$.
Therefore, if $\Frac{\tau}{\beta}-\Frac{1}{2} \ge 0$ the previous inequality holds for all $\Delta t >0$, this means
the stability $\forall \Delta t >0$.


Now if $\tau < \Frac{\beta}{2}$, then, since $<A(u^{(k+1)}-u^{(k)}),u^{(k+1)}-u^{(k)}> \le \rho(A)  \parallel u^{(k+1)}-u^{(k)} \parallel^2$, a sufficient condition of stability is
$$
\Frac{1}{\Delta t}
+\left(\Frac{\tau}{\beta}-\Frac{1}{2}\right)\rho(A) >0,
$$
from which we deduce
$$
0<\Delta t < \Frac{2}{\left(1-2\Frac{\tau}{\beta}\right)\rho(A) },
$$
as a sufficient stability condition.
\end{proof}
\\
We point out that if $B=A$ (then $\alpha=\beta=1$) and $\tau=\theta \in [0,1]$, the stability condition coincide with the one of the $\theta$-scheme.\\
The stability is easily obtained when taking $\tau$ large enough. However, a too large value of $\tau$ deteriorates the consistency of the scheme and, as a particular effect, the convergence to the steady state is longer in time.
In fact both the value of $\tau$ and the preconditioning quality of $B$ act on the accuracy of the RRS scheme which remains always first order accurate in time as illustrated in Figure (\ref{tau_effect}).
\\
\begin{figure}[!h]
\begin{center}
\includegraphics[width=7cm]{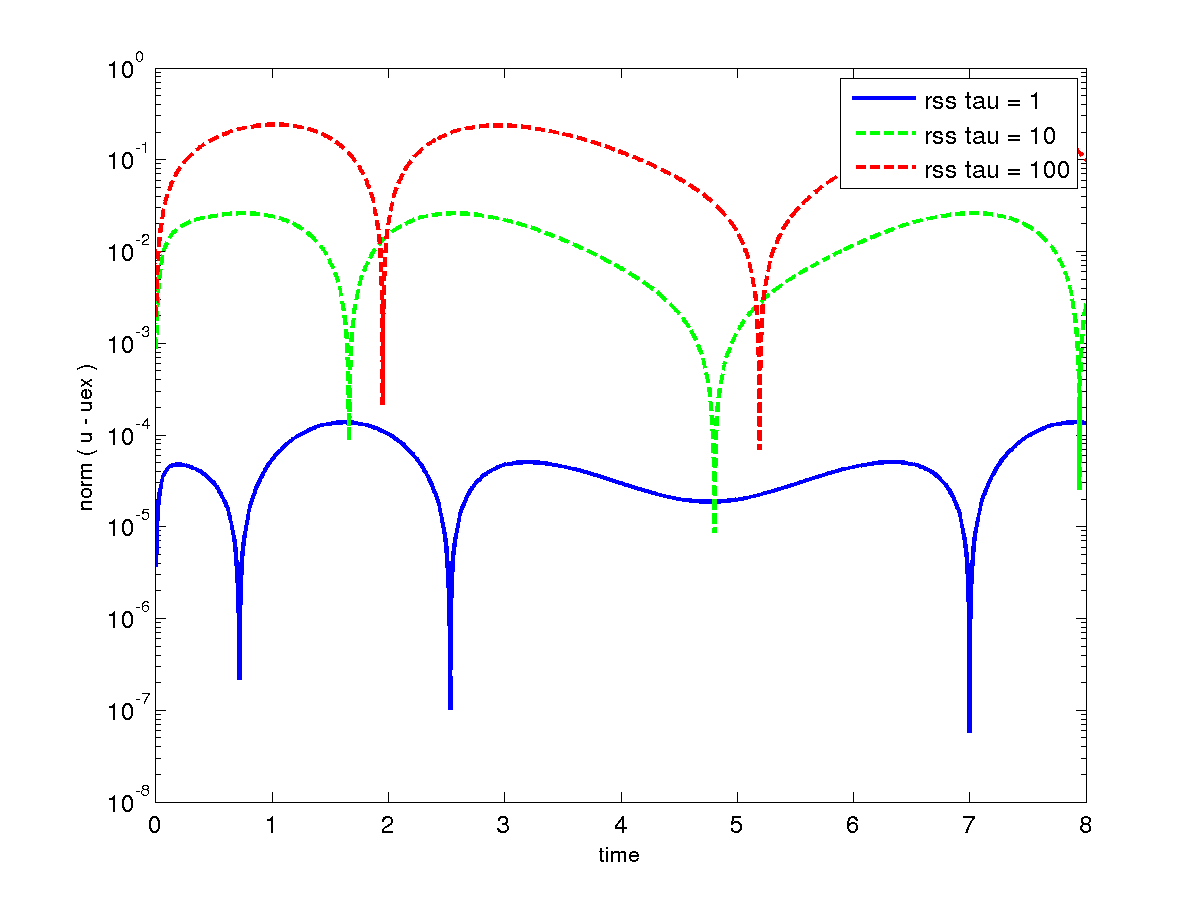}
\includegraphics[width=7cm]{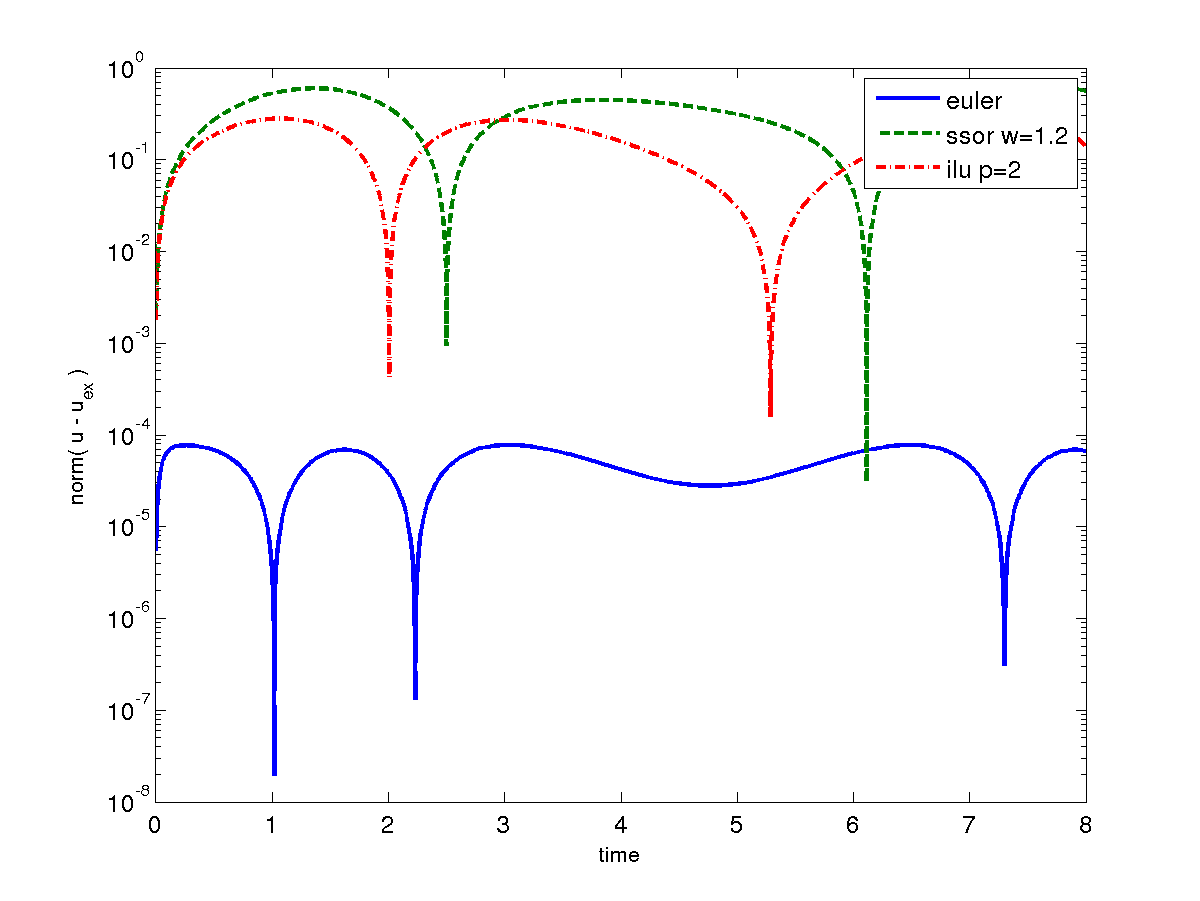}\\
\caption{(left) Influence of the parameter $\tau$ on the accuracy of RSS. Error vs time for different values of $\tau$  - $N=127$, $\Delta t=0.004$, (right) Error vs time with ILU(2) and SSOR preconditioners, $N=63$,$\tau1$, $\Delta t=0.004$}
\label{tau_effect}
\end{center}
\end{figure}
\\

A first natural question is the choice of the best value $\tau_{opt}$ of $\tau$, for a fixed preconditioner $B$; $\tau_{opt}$ can be simply computed such as minimizing $\parallel \tau B -A\parallel_F$. We easily show that
$$
\tau_{opt}=\Frac{trace(B^TA)}{\parallel B\parallel^2_F}.
$$
We remark that when $B=Id$, then $\tau_{opt}=\Frac{trace(A)}{n}$, the mean value of the eigenvalues of $A$.

A second natural question deals with the gain of stability brought by the RSS scheme as respect to an explicit method, namely forward Euler's for which the time step must be taken strictly lower than $\Delta t_c=\Frac{2}{\rho(A)}$.
In other words, for a given preconditioner matrix $B$ and for a given number $\tau$, we look to $\kappa>1$ such that
RSS is stable with time step $\Delta t =\kappa \Delta t_c$.
We deduce directly from the previous computations.
\begin{proposition}
\begin{itemize}
\item If $\tau\ge \Frac{\beta}{2}$, RSS is infinitely more stable than backward Euler's.
\item If $\tau < \Frac{\beta}{2}$, then RSS is at least $\kappa=\Frac{1}{1-\Frac{2\tau}{\beta}}$ times more stable than Euler's.
\end{itemize} 
\end{proposition}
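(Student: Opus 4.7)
The plan is to read off the proposition as a direct corollary of the preceding Proposition \ref{RSS_Stab_lin}, comparing the two stability bounds term by term. The only new ingredient is the well-known forward Euler critical step $\Delta t_c = 2/\rho(A)$, which I would state at the outset (or recall briefly, since it follows from requiring that the spectral radius of $\mathrm{Id} - \Delta t A$ stay at most $1$ for all eigenvalues of $A$, which is equivalent to $\Delta t \leq 2/\rho(A)$).

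For the first bullet, the case $\tau \geq \beta/2$, I would simply point to the fact that Proposition \ref{RSS_Stab_lin} already gives unconditional stability of RSS. In particular, for \emph{any} $\kappa > 1$ the choice $\Delta t = \kappa \Delta t_c$ preserves stability, so no finite stability ratio suffices to describe the gain; this is what "infinitely more stable than forward Euler" means, and I would phrase it as $\kappa = +\infty$.

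For the second bullet, the case $\tau < \beta/2$, the argument is purely algebraic: Proposition \ref{RSS_Stab_lin} gives stability for
\[
0 < \Delta t < \frac{2}{\left(1 - \frac{2\tau}{\beta}\right)\rho(A)} = \frac{1}{1 - \frac{2\tau}{\beta}} \cdot \frac{2}{\rho(A)} = \frac{1}{1 - \frac{2\tau}{\beta}} \cdot \Delta t_c,
\]
so setting $\Delta t = \kappa \Delta t_c$ yields the announced factor $\kappa = 1/(1 - 2\tau/\beta)$. I would then check the sign constraint: since $\tau > 0$ and $\tau < \beta/2$, one has $0 < 2\tau/\beta < 1$ and hence $\kappa > 1$, confirming the qualitative claim that RSS really is strictly more stable than forward Euler. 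There is no genuine obstacle here; the work is a single line of arithmetic, and the only point worth flagging is that the proposition in the paper uses "backward Euler's" in the first bullet where forward Euler is plainly meant (since the whole comparison is with the explicit critical step $\Delta t_c$).
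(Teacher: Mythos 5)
Your proposal is correct and follows essentially the same route as the paper: both read the result directly off Proposition \ref{RSS_Stab_lin} by writing the RSS stability bound as $\kappa\cdot\frac{2}{\rho(A)}$ and solving for $\kappa$. Your added check that $\kappa>1$ and your observation that the first bullet should read ``forward Euler'' rather than ``backward Euler'' are both sound, but they do not change the argument.
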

\begin{proof}
We deduce from the proposition \ref{RSS_Stab_lin} that 
$
\Frac{2}{\left(1-\Frac{2\tau}{\beta}\right)\rho(A)}=\kappa \Frac{2}{\rho(A)},
$
then
$
\kappa=\Frac{1}{\left(1-\Frac{2\tau}{\beta}\right)}.
$
\end{proof}
\\

We now propose to quantify the consistency error with $\tau$ by comparison with the Backward Euler Scheme (which is unconditionally stable). Particularly we analyse the behavior of the difference of the sequences generated by the two schemes: the stabilization has as an effect to slow down the convergence in time to the steady state.

\begin{proposition}
We consider the two sequences
$$
 \Frac{u^{(k+1)}-u^{(k)}}{\Delta t} +\tau B (u^{(k+1)}-u^{(k)}) =f-Au^{(k)},
 $$
and
$$
 \Frac{v^{(k+1)}-v^{(k)}}{\Delta t} +A v^{(k+1)} =f,
 $$
 with $u^{(0)}=v^{(0)}$. We let $M=Id-\Delta t(Id+\tau \Delta t B)^{-1}A$ and we assume that $\parallel M\parallel < 1$,
 then,  there exists $\gamma \in [0,1[$ such that
 $$
\parallel u^{(k)}- v^{(k)}\parallel \le  \Delta t^2 \parallel  \tau B-A\parallel \Frac{1}{1-\gamma}\parallel  f-Av^{(0)} \parallel  , \forall k \ge 0.
$$
\end{proposition}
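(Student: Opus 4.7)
The plan is to unroll the two schemes into their explicit one-step forms, subtract, isolate the structural term that couples them, and sum a geometric series.

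First I would rewrite both iterations as fixed-point maps. Let $P = Id + \tau \Delta t\,B$ and $Q = Id + \Delta t\, A$. Then the RSS scheme gives $u^{(k+1)} = u^{(k)} + \Delta t\,P^{-1}(f - A u^{(k)})$, while backward Euler, after the identity $Q^{-1} = Id - \Delta t\,Q^{-1}A$, rewrites as $v^{(k+1)} = v^{(k)} + \Delta t\,Q^{-1}(f - A v^{(k)})$. Subtracting, adding and subtracting $\Delta t\,P^{-1}(f - A v^{(k)})$, and collecting terms yields
\[
e^{(k+1)} = M\,e^{(k)} + \Delta t\,(P^{-1} - Q^{-1})(f - A v^{(k)}),
\]
where $e^{(k)} = u^{(k)} - v^{(k)}$ and $M = Id - \Delta t\,P^{-1}A$.

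Next, the crucial algebraic identity is the resolvent-type relation $P^{-1} - Q^{-1} = P^{-1}(Q-P)Q^{-1} = \Delta t\,P^{-1}(A - \tau B)\,Q^{-1}$, which brings out the factor $\|\tau B - A\|$ and an extra $\Delta t$, giving the desired $\Delta t^2$. In parallel I would analyze the residual $r^{(k)} := f - A v^{(k)}$: from the scheme, $r^{(k+1)} = (Id - \Delta t\,A\,Q^{-1})\,r^{(k)} = Q^{-1}r^{(k)}$, so $r^{(k)} = Q^{-k}r^{(0)}$. Because $A$ is SPD, $\|Q^{-1}\| \le 1$, and likewise $\|P^{-1}\| \le 1$ from positivity of $B$; thus $\|r^{(k)}\| \le \|r^{(0)}\| = \|f - A v^{(0)}\|$.

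Finally I would unroll the recursion with $e^{(0)} = 0$:
\[
e^{(k)} = \Delta t^2 \sum_{j=0}^{k-1} M^{k-1-j}\,P^{-1}(A-\tau B)\,Q^{-1}\,r^{(j)}.
\]
Taking norms, using $\|P^{-1}\|,\|Q^{-1}\| \le 1$, $\|r^{(j)}\| \le \|r^{(0)}\|$, and $\|M^{k-1-j}\| \le \|M\|^{k-1-j}$, the sum majorizes to a geometric series $\sum_{j\ge 0}\|M\|^j = 1/(1-\|M\|)$. Setting $\gamma := \|M\| \in [0,1)$ by hypothesis produces the stated bound.

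The main obstacle is cosmetic rather than deep: one must be careful that $P$, $Q$, $M$ do not commute in general, so the unrolling has to be written out (not manipulated as if everything were scalar), and the uniform bound $\|r^{(k)}\| \le \|r^{(0)}\|$ must be obtained from the structural identity $r^{(k+1)} = Q^{-1} r^{(k)}$ rather than from asking $\|M\|<1$ (which governs only the homogeneous part). Once those two points are handled, the geometric summation is immediate.
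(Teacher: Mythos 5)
Your proof is correct and follows essentially the same route as the paper's: an error recursion $e^{(k+1)} = M e^{(k)} + (\text{forcing term})$, with the forcing term of size $\Delta t^2 \|\tau B - A\|\,\|f - Av^{(0)}\|$, closed by a geometric series using $\|M\|<1$. The only differences are bookkeeping — you extract the $\Delta t^2$ via the resolvent identity $P^{-1}-Q^{-1}=\Delta t\,P^{-1}(A-\tau B)Q^{-1}$ where the paper uses $v^{(k+1)}-v^{(k)}=\Delta t (Id+\Delta t A)^{-1}r^{(k)}$ (the same forcing term in disguise), and you take $\gamma=\|M\|$ where the paper takes $\gamma$ to be the norm of the residual propagator, which you in fact compute correctly as $(Id+\Delta t A)^{-1}$ while the paper misprints it as $(Id-\Delta t A)(Id+\Delta t A)^{-1}$.
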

\begin{proof}
We first remark that $\parallel M\parallel < 1$ implies the stability of the RSS scheme, since $M$ is the iteration matrix and $\rho(M)\le \parallel M\parallel $.

We take the difference and we let $w^{(k)}=u^{(k)}-v^{(k)}$. We have
$$
 \Frac{w^{(k+1)}-w^{(k)}}{\Delta t} +\tau B (w^{(k+1)}-w^{(k)}) +(\tau B-A)(v^{(k+1)} -v^{(k)})=-Aw^{(k)}
 $$
Hence, after the usual simplifications, we can write
$$
w^{(k+1)}=(Id-\Delta t(Id+\tau \Delta t B)^{-1}A)w^{(k)} -\Delta t (Id+\tau \Delta t B)^{-1}(\tau B-A)(v^{(k+1)} -v^{(k)}).
$$ 
Using the definition of $M$, we obtain directly the estimate
$$
\parallel w^{(k+1)}\parallel \le  \parallel M \parallel \parallel w^{(k+1)}\parallel 
+ \Delta t \parallel  (Id+\tau \Delta t B)^{-1}\parallel \parallel  \tau B-A\parallel  \parallel v^{(k+1)} -v^{(k)}\parallel .
$$
We first remark that we have the relations
$$
v^{(k+1)} -v^{(k)}=\Delta t (Id+\Delta t A)^{-1}(f-Av^{(k)}),
$$
and
$$
r^{(k+1)}=(Id-\Delta t A)(Id+\Delta t A)^{-1} r^{(k)},
$$
where we have set $r^{(k)}=f-Av^{(k)}$. It follows that 
$$
  v^{(k+1)} -v^{(k)} = \Delta t (Id+\Delta t A)^{-1}\left((Id-\Delta t A)(Id+\Delta t A)^{-1}\right)^{k}r^{(0)}.
$$
Then
$$
\parallel  v^{(k+1)} -v^{(k)} \parallel
\le \Delta t \parallel  (Id+\Delta t A)^{-1} \parallel  \parallel  ( Id-\Delta t A)(Id+\Delta t A)^{-1}\parallel^k
\parallel  r^{(0)} \parallel .
$$
We set $\gamma=\parallel   (Id-\Delta t A)(Id+\Delta t A)^{-1}\parallel$, we have of course $\gamma <1, \forall \Delta t >0$. 
A simple induction gives
$$
\parallel w^{(k+1)}\parallel \le  \parallel M \parallel^k \parallel w^{(0)}\parallel 
+  \Delta t ^2\parallel  (Id+\tau \Delta t B)^{-1}\parallel \parallel  \tau B-A\parallel \parallel  (Id+\Delta t A)^{-1} \parallel
\displaystyle{\sum_{j=0}^k\gamma^{(j)} \parallel M \parallel^{k-j}} \parallel  r^{(0)} \parallel .
$$
Using $ \parallel M \parallel<1$ and $w^{(0)}=0$ we find
$$
\parallel w^{(k+1)}\parallel \le  
+  \Delta t \parallel  (Id+\tau \Delta t B)^{-1}\parallel \parallel  \tau B-A\parallel \Delta t \parallel  (Id+\Delta t A)^{-1} \parallel
\displaystyle{\sum_{j=0}^k\gamma^{(j)}\parallel  r^{(0)}} \parallel .
$$
Hence
$$
\parallel w^{(k+1)}\parallel \le  \Delta t^2 \parallel  \tau B-A\parallel \Frac{1}{1-\gamma}\parallel  r^{(0)} \parallel ,
$$
which shows the dependence on $\parallel  \tau B-A\parallel$.
Of course if $\tau=1$ and $B=A$ we have $w^{(k)}=0, \forall k$.
Hence the result.
\end{proof}
\\

We deduce immediately the
\begin{corollary}
The RSS method is first order accurate.
\end{corollary}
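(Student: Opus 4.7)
The plan is to obtain the corollary as a short consequence of the preceding proposition, using the well-known first-order accuracy of the backward Euler scheme as a comparison reference. Let $u(t)$ denote the exact solution of the continuous linear problem $\frac{du}{dt} + Au = f$ with the common initial datum $u^{(0)} = v^{(0)} = u_0$, let $u^{(k)}$ be the RSS iterates and $v^{(k)}$ the backward Euler iterates appearing in the proposition, and write $t_k = k\Delta t$.

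First I would invoke the classical convergence result for backward Euler applied to a linear ODE with a symmetric positive definite stiffness matrix: under the standing smoothness assumptions on the data, there is a constant $C_1$, independent of $\Delta t$ (and, on a bounded time interval $[0,T]$, independent of $k$ with $k\Delta t \le T$), such that
\begin{equation*}
\parallel v^{(k)} - u(t_k) \parallel \le C_1 \, \Delta t.
\end{equation*}
This is standard and follows from a consistency-plus-stability argument for the implicit Euler method.

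Next I would apply the proposition just proved: under the assumption $\parallel M \parallel < 1$ (which is precisely a stability hypothesis for the RSS scheme), the bound
\begin{equation*}
\parallel u^{(k)} - v^{(k)} \parallel \le \Delta t^{2}\, \parallel \tau B - A \parallel \, \frac{1}{1-\gamma}\, \parallel f - A v^{(0)} \parallel
\end{equation*}
holds uniformly in $k$. The right-hand side is $C_2\, \Delta t^2$ with $C_2$ independent of $k$ and $\Delta t$ (the factor $\tfrac{1}{1-\gamma}$ can be controlled uniformly since $\gamma = \parallel(Id-\Delta t A)(Id+\Delta t A)^{-1}\parallel$ stays bounded away from $1$ for $A$ symmetric positive definite).

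Combining the two bounds by the triangle inequality yields
\begin{equation*}
\parallel u^{(k)} - u(t_k) \parallel \le \parallel u^{(k)} - v^{(k)} \parallel + \parallel v^{(k)} - u(t_k) \parallel \le C_2\, \Delta t^{2} + C_1\, \Delta t = O(\Delta t),
\end{equation*}
which is exactly the statement that RSS is first order accurate in time. The only delicate point — and the one I would treat with most care — is the uniform control of $\tfrac{1}{1-\gamma}$ with respect to $\Delta t$: this requires verifying that $\gamma$ does not approach $1$ as $\Delta t \to 0$, which holds here because $A$ has strictly positive eigenvalues bounded away from $0$, so $(1-\Delta t \lambda)/(1+\Delta t \lambda) \le 1 - c\, \Delta t$ uniformly on the spectrum of $A$. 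Everything else is essentially a triangle-inequality exercise.
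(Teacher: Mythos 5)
Your overall strategy is exactly the one the paper intends: its one-line proof (``it suffices to own that the Backward Euler method is first order accurate'') is precisely your triangle inequality $\parallel u^{(k)}-u(t_k)\parallel\le\parallel u^{(k)}-v^{(k)}\parallel+\parallel v^{(k)}-u(t_k)\parallel$ combined with the preceding proposition, so you have essentially filled in the argument the authors leave implicit. However, the one point you single out as delicate is also the one you resolve incorrectly. You claim that $\gamma=\parallel(Id-\Delta t A)(Id+\Delta t A)^{-1}\parallel$ stays bounded away from $1$ as $\Delta t\to0$, and in the same sentence justify this with the estimate $(1-\Delta t\lambda)/(1+\Delta t\lambda)\le 1-c\,\Delta t$; these two statements contradict each other, since $1-c\,\Delta t\to1$ as $\Delta t\to0$. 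The correct conclusion from your own eigenvalue computation is $\gamma\le 1-c\,\Delta t$ with $c$ of the order of $\lambda_{min}$, hence $\frac{1}{1-\gamma}\le\frac{1}{c\,\Delta t}$: the constant $C_2$ in your bound $\parallel u^{(k)}-v^{(k)}\parallel\le C_2\,\Delta t^2$ is \emph{not} independent of $\Delta t$, and the proposition really yields $\parallel u^{(k)}-v^{(k)}\parallel\le \frac{1}{c}\parallel\tau B-A\parallel\,\parallel f-Av^{(0)}\parallel\,\Delta t$, i.e.\ an $O(\Delta t)$ rather than $O(\Delta t^2)$ proximity to backward Euler.

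Fortunately this is harmless for the corollary itself: $O(\Delta t)+O(\Delta t)=O(\Delta t)$, so first-order accuracy of RSS still follows and your conclusion stands. But you should correct the claim about $\gamma$, both because it is false as written and because the weaker bound changes the interpretation of the comparison with backward Euler: the $\Delta t^2$ in the displayed estimate is partly absorbed by the factor $\frac{1}{1-\gamma}$, so RSS is only first-order close to the implicit Euler iterates, not second-order close uniformly in $\Delta t$.
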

\begin{proof}
It suffices to own that the Backward Euler method is first order accurate.
\end{proof}
\\
\\
We now will consider the particular  the case  in which $B$ is a diagonal matrix. This choice allows a fast solution of the implicit part of RSS, the matrix $Id+\tau \Delta t B$ being also diagonal. In general situations, a diagonal preconditioner is not the most efficient, but in some cases, e.g. when the discrete problem in written in hierarchical-like bases, it is particularly interesting to consider $B$ as a diagonal matrix: A. Cohen {\it et al} \cite{AverbuchCohenIsraeli} have introduced the RSS scheme for a problem discretized in a wavelet basis, in which the diagonal part of the stiffness matrix is an inconditional preconditioner; this was independently applied by Costa {\it et al} \cite{BCostaPHD,CDGT} using Fourier and Chebyshev expansions and then in finite differences with incrementals unkowns by Chehab and Costa \cite{ChehabCosta1,ChehabCosta2,ccraptech}. The underlying idea is to decompose the unknowns $U$ of the nodal basis into a hierarchy of arrays of details at different levels $({\hat U}_0, {\hat U}_1,\cdots,{\hat U}_d)^T$; here ${\hat U}_0$ is associated to a coarse discretization and then captures only low frequencies
while the other set of components ${\hat U}_j, \ j=1,\cdots, d$ are details associated to refinment of the coarse approximation space and capture high frequencies. \\
The time limitation of the explicit schemes for parabolic problems depends on its capability to contain high frequencies expansions. In Fourier-like basis, the details attached to high frequencies are small quantities regardless to the details attached to low frequencies since they contribute residually to the energy norm of the signal. As proposed
in \cite{CDGT,ChehabCosta1,ChehabCosta2} this situation allows to damp differently the high and the low frequencies components without deterioring the consistency of the scheme.\\

Consider for instance  the heat equation
$$
u_t-u_{xx}=f,
$$
that we discretize in time with the forward Euler scheme as
$$
\Frac{u^{(k+1)}-u^{(k)}}{\Delta t} -u^{(k)}_{xx}=f.
$$
In \cite{BCostaPHD,CDGT},  this scheme was proposed to be stabilized as
$$
\Frac{u^{(k+1)}-u^{(k)}}{\Delta t} +\beta
\Frac{u^{(k+1)}-u^{(k)}}{\Delta t} -u^{(k)}_{xx}=f.
$$
Considering a Fourier discretization, we find, after simplifications
$$
\Frac{{\hat u}^{(k+1)}_m-{\hat u}_m^{(k)}}{\Delta t} -\Frac{\beta}{\Delta t}({\hat u}_m^{(k+1)}-{\hat u}_m^{(k)})
=-k^2{\hat u}^{(k)}_m + {\hat f}_m, \ m=1,\cdots, N.
$$
If we decompose the frequency range $[1,N]$ into $\displaystyle{\cup_{j=1}^{d}[N_{j-1},N_{j}]}$, the above scheme can be applied 
to each range with a different stabilizing parameter $\beta_j$, so we obtain, for $j=1,\cdots,d$
$$
\Frac{{\hat u}^{(k+1)}_m-{\hat u}^{(k)}_m}{\Delta t} -\Frac{\beta_j}{\Delta t}({\hat u}^{(k+1)}_m-{\hat u}^{(k)}_m)
=-m^2{\hat u}^{(k)}_m + {\hat f}_m, \ m=N_{j-1}, \cdots, N_{j}.
$$
Letting ${\tilde \beta}_i=\Frac{\beta_i}{\Delta t}$, the last scheme is rewritten as
$$
{\hat u}^{(k+1)}_m=\left(1-\Frac{m^2\Delta t}{1+\Delta t {\tilde \beta_i}}\right){\hat u}^{(k)}_m
+\Frac{\Delta t}{1+\Delta t {\tilde \beta_i}}{\hat f}_m, \ m=N_{j-1}, \cdots, N_{j}.
$$
The stability condition is then
$$
(m^2-2{\tilde \beta}_j)\Delta t < 2, m=N_{j-1}, \cdots, N_{j}.
$$
The scheme is unconditionally stable at level $j$ if ${\tilde \beta}_j\ge \Frac{N^2_{j+1}}{2}$ and stable under condition $0<\Delta t <\Frac{2}{N^2_{j+1}-2{\tilde \beta}_j}$ otherwise. This condition is of course similar to that found in Proposition \ref{RSS_Stab_lin}.

Now, considering all the components, we write
$$
\Frac{{\hat u}^{(k+1)}-{\hat u}^{(k)}}{\Delta t} -B({\hat u}^{(k+1)}-{\hat u}^{(k)})
=-D{\hat u}^{(k)} + {\hat f}, \
$$
with $D=diag(1,4,9, \cdots N^2)$ and 
$$
B=
\left(
\begin{array}{llll}
B_1 & 0 & & 0\\
0 & \ddots &  & \\
 & & \ddots &   \\
 0 & &  & B_d\\
\end{array}
\right),
$$
where $B_j=\beta_j Id_j$, $Id_j$ being the identity matrix of size $(N_{j}-N_{j-1}) \times (N_{j}- N_{j-1})$ (we have set $N_0=0$ and $N_d=N$).

$B$ is then a preconditioner of $D$ in the Fourier space; in the linear case, these two approaches coincide.
Costa and Chehab \cite{ChehabCosta1,ChehabCosta2} have
extended this scheme to hierarchical discretizations in finite
differences.\\
Note that this framework allows to damp high frequencies and to leave unchanged the low ones changing only slightly the consistency of the scheme while increasing its stability. This can be done typically by taking $\beta_j=0$ on the low freqencies components and $\beta_j >>1$ on the high ones.

As stated in the introduction, we concentrate on problems discretized in the nodal basis, however, it is useful to make the link with the hierarchical approach. We give a block version of Proposition \ref{RSS_Stab_lin}.

We assume that the stiffness matrix $A$ written in a detail basis posseses the following block decomposition
$$
A=\left(
\begin{array}{cccc}
A_{1,1} & A_{1,2}&\cdots & A_{d,d}\\
A_{2,1} &  & & \\
\vdots & &  &\\
 A_{d,1}& & \cdots &A_{d,d}\\
\end{array}
\right).
$$
We note the corresponding  block decomposition of a vector $U$ as $U=(u_1,\cdots, u_d)^T$.
We have the
\begin{theorem}
A sufficient stability condition is
$$
\Frac{1}{\Delta t}   +\tau \beta_i -\Frac{1}{4}\left(\sum_{j=1}^d\parallel A_{i,j}\parallel  +\parallel A_{j,i}\parallel\right)
>0, i=1,\cdots, d.
$$
\end{theorem}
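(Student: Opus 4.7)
My plan is to extend the strategy of Proposition \ref{RSS_Stab_lin} to the block setting, with a block Cauchy--Schwarz/Young inequality replacing the scalar bound used there. Take the Euclidean inner product of the RSS iteration with $\delta := u^{(k+1)} - u^{(k)}$; by symmetry of $A$, the parallelogram identity invoked in Proposition \ref{RSS_Stab_lin} rewrites $\langle Au^{(k)}, \delta\rangle$ and yields, after rearrangement,
$$\langle Au^{(k+1)}, u^{(k+1)}\rangle - \langle Au^{(k)}, u^{(k)}\rangle = -\frac{2}{\Delta t}\|\delta\|^2 - 2\tau\langle B\delta, \delta\rangle + \langle A\delta, \delta\rangle.$$
Hence $A$-energy decay (the stability criterion adopted in Proposition \ref{RSS_Stab_lin}) reduces to showing $\frac{1}{\Delta t}\|\delta\|^2 + \tau\langle B\delta, \delta\rangle \ge \frac{1}{2}\langle A\delta, \delta\rangle$ for every $\delta \in \mathbb{R}^N$.

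Next I exploit the block structure of $B$. Since $B_i = \beta_i Id_i$, both $\|\delta\|^2$ and $\langle B\delta, \delta\rangle$ decompose diagonally: $\|\delta\|^2 = \sum_i \|\delta_i\|^2$ and $\langle B\delta, \delta\rangle = \sum_i \beta_i \|\delta_i\|^2$. For $A$, I expand $\langle A\delta, \delta\rangle = \sum_{i,j}\langle A_{i,j}\delta_j, \delta_i\rangle$ and use Cauchy--Schwarz followed by Young's inequality $ab \le \tfrac{1}{2}(a^2+b^2)$:
$$|\langle A_{i,j}\delta_j, \delta_i\rangle| \le \|A_{i,j}\|\,\|\delta_i\|\,\|\delta_j\| \le \tfrac{1}{2}\|A_{i,j}\|\bigl(\|\delta_i\|^2 + \|\delta_j\|^2\bigr).$$
Relabeling $i \leftrightarrow j$ in the $\|\delta_j\|^2$ part symmetrizes the double sum into
$$|\langle A\delta, \delta\rangle| \le \tfrac{1}{2}\sum_{i=1}^d \|\delta_i\|^2 \sum_{j=1}^d (\|A_{i,j}\| + \|A_{j,i}\|).$$

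Combining the two steps, a sufficient condition for $A$-energy decay is
$$\sum_{i=1}^d \left(\frac{1}{\Delta t} + \tau\beta_i - \tfrac{1}{4}\sum_{j=1}^d(\|A_{i,j}\| + \|A_{j,i}\|)\right)\|\delta_i\|^2 \ge 0$$
for all $\delta$, and the stated block-wise inequality makes every bracket nonnegative. I do not expect any real obstacle: the factor $\tfrac{1}{4}$ arises transparently from the $\tfrac{1}{2}$ of the parallelogram identity times the $\tfrac{1}{2}$ of Young's inequality, and the only delicate point is the index relabeling that produces the symmetric pairing $\|A_{i,j}\| + \|A_{j,i}\|$.
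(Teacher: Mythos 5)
Your proposal is correct and follows essentially the same route as the paper's proof: scalar product with $u^{(k+1)}-u^{(k)}$, the parallelogram identity to reduce stability to an energy-decay inequality, then the block decomposition with Cauchy--Schwarz, Young's inequality, and the index relabeling that produces the symmetric sum $\parallel A_{i,j}\parallel + \parallel A_{j,i}\parallel$ with the factor $\frac{1}{4}$. No substantive difference.
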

\begin{proof}
Taking the scalar product of the equation with $U^{(k+1)}-U^{(k)}$, we find
$$
\begin{array}{ll}
\Frac{1}{\Delta t} \parallel U^{(k+1)}-U^{(k)} \parallel^2
+\tau <B(U^{(k+1)}-U^{(k)}),U^{(k+1)}-U^{(k)}> & \\
&\\
-\Frac{1}{2}\left(<AU^{(k)},U^{(k)}> -<AU^{(k+1)},U^{(k+1)}>+
<A(U^{(k+1)}-U^{(k)}),U^{(k+1)}-U^{(k)}>\right)&=0,\\
\end{array}
$$
and using the block decomposition
$$
\begin{array}{ll}
\displaystyle{\sum_{i=1}^d\Frac{1}{\Delta t} \parallel u_i^{(k+1)}-u_i^{(k)} \parallel^2
+\tau \beta_i \parallel u_i^{(k+1)}-u_i^{(k)} \parallel^2} & \\
&\\
-\Frac{1}{2}\left(<AU^{(k)},U^{(k)}> -<AU^{(k+1)},U^{(k+1)}>\right)+
-\Frac{1}{2}\displaystyle{\sum_{i=1}^d\sum_{j=1}^d<A_{i,j}(u_j^{(k+1)}-u_j^{(k)}),u_i^{(k+1)}-u_i^{(k)}>}&=0,\\
\end{array}
$$
A sufficient condition for the energy stability is then
$$
\displaystyle{\sum_{i=1}^d\Frac{1}{\Delta t} \parallel u_i^{(k+1)}-u_i^{(k)} \parallel^2
+\tau \beta_i \parallel u_i^{(k+1)}-u_i^{(k)} \parallel^2}
-\Frac{1}{2}\displaystyle{\sum_{i=1}^d\sum_{j=1}^d<A_{i,j}(u_j^{(k+1)}-u_j^{(k)}),u_i^{(k+1)}-u_i^{(k)}>}>0.
$$
Since,
$$
\begin{array}{ll}
-\Frac{1}{2}
\displaystyle{\sum_{i=1}^d\sum_{j=1}^d<A_{i,j}(u_j^{(k+1)}-u_j^{(k)}),u_i^{(k+1)}-u_i^{(k)}>}
&\ge 
-\Frac{1}{2}
\displaystyle{\sum_{i=1}^d\sum_{j=1}^d\parallel A_{i,j}\parallel \mid u_j^{(k+1)}-u_j^{(k)}\mid \mid u_i^{(k+1)}-u_i^{(k)}\mid } \\
&\ge 
-\Frac{1}{4}
\displaystyle{\sum_{i=1}^d\sum_{j=1}^d\parallel A_{i,j}\parallel \mid u_j^{(k+1)}-u_j^{(k)}\mid^2}\\
&-\Frac{1}{4}\displaystyle{\sum_{i=1}^d\sum_{j=1}^d\parallel A_{i,j}\parallel \mid u_i^{(k+1)}-u_i^{(k)}\mid^2} \\
&=-\Frac{1}{4}
\displaystyle{\sum_{i=1}^d\left(\sum_{j=1}^d\parallel A_{i,j}\parallel  +\parallel A_{j,i}\parallel \right)}
\end{array}
$$
We find as sufficient stability condition
$$
\displaystyle{\sum_{i=1}^d\left(\Frac{1}{\Delta t}   +\tau \beta_i -\Frac{1}{4}\left(\sum_{j=1}^d\parallel A_{i,j}\parallel  +\parallel A_{i,j}\parallel\right)\right)\parallel u_i^{(k+1)}-u_i^{(k)} \parallel^2} >0.
$$
In particular, if $\tau \beta_i \ge \Frac{1}{4}\left(\sum_{j=1}^d\parallel A_{i,j}\parallel  +\parallel A_{i,j}\parallel\right)
>0$, the stability is unconditional.
\end{proof}
\\

It has to be noted that when $d=1$ and $B=Id$, we recover the result given by Proposition \ref{RSS_Stab_lin}.\\

We infer also that, the extradiagonal coefficients of stiffness matrices in  hierarchical-like basis enjoy of a descreasing magnitude properties far from the diagonal making successful the approach.

If we consider Fourier basis, the stifness matrix $A$ is diagonal and the stability condition at range $j$ is
$$
(m^2-2{\tilde \beta}_j)\Delta t < 2, m=N_{j-1}, \cdots, N_{j}
$$
so, taking $\beta_j=\Frac{N^2_{j+1}}{2}$, we have an incondiditionally stable RSS scheme.


\subsubsection{The nonlinear case}
We aim at applying the RSS Scheme to reaction-diffusion equation (such as Allen-Cahn's, see section 4) say
\begin{eqnarray}
\Frac{\partial u}{\partial t} -\Delta u +\Frac{1}{\epsilon^2}f(u)=0, & x\in \Omega, t>0,\\
\Frac{\partial u}{\partial n}= 0 & \partial \Omega , t>,\\
u(x,0)=u_0(x) & x\in \Omega .
\end{eqnarray}
 The RSS scheme applied to the discretized scheme reads as
\begin{eqnarray}
 \Frac{u^{(k+1)}-u^{(k)}}{\Delta t} +\tau B (u^{(k+1)}-u^{(k)}) +\Frac{1}{\epsilon^2}f(u^{(k)}) =-Au^{(k)}
 \label{RSSAC}
 \end{eqnarray}
 We set $E(u)=\Frac{1}{2}<Au,u>+\Frac{1}{\epsilon^2}<F(u),1>$, where $F$ is a primitive of $f$ that we choose such that $F(0)=0$. We say that the scheme is energy descreasing if
 $$
 E(u^{(k+1)}) < E(u^{(k)})
 $$
 Particularly,  if $F$ as nonnegative values, the scheme will be stable for the norm $\mid u \mid_A=\sqrt{<Au,u>}$.
\begin{theorem}
Assume that $f$ is ${\cal C}^1$ and $\mid f'\mid_{\infty}\le L$. We have the following stability conditions
\begin{itemize}
\item If $\tau\ge \Frac{\beta}{2}$ then
\begin{itemize}
\item if $\left(\Frac{\tau}{\beta}-\Frac{1}{2}\right)\lambda_{min} -\Frac{L}{2\epsilon^2}\ge 0$ then the scheme is unconditionally stable
\item if $\left(\Frac{\tau}{\beta}-\Frac{1}{2}\right)\lambda_{min} -\Frac{L}{2\epsilon^2}< 0$ then the scheme is table for
$$
0<\Delta t <\Frac{1}{\Frac{L}{2\epsilon^2} -\left(\Frac{\tau}{\beta}-\Frac{1}{2}\right)\lambda_{min}}
$$
\end{itemize}
\item If $\tau < \Frac{\beta}{2}$ then the scheme is table for
$$
0<\Delta t <\Frac{1}{\Frac{L}{2\epsilon^2} -\left(\Frac{\tau}{\beta}-\Frac{1}{2}\right)\rho(A)}
$$
\end{itemize} 
\label{Stab_AC}
\end{theorem}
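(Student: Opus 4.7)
The plan is to mirror the energy argument used in Proposition \ref{RSS_Stab_lin}, taking the scalar product of (\ref{RSSAC}) with $u^{(k+1)}-u^{(k)}$ and showing that $E(u^{(k+1)})-E(u^{(k)})\le 0$ under the claimed time step restrictions. After substitution we will obtain
$$
\Frac{1}{\Delta t}\|u^{(k+1)}-u^{(k)}\|^2+\tau\langle B(u^{(k+1)}-u^{(k)}),u^{(k+1)}-u^{(k)}\rangle +\Frac{1}{\epsilon^2}\langle f(u^{(k)}),u^{(k+1)}-u^{(k)}\rangle = -\langle Au^{(k)},u^{(k+1)}-u^{(k)}\rangle,
$$
and we will treat the $A$ term with the parallelogram identity exactly as in Proposition \ref{RSS_Stab_lin}, so as to make the increment $\tfrac12(\langle Au^{(k+1)},u^{(k+1)}\rangle-\langle Au^{(k)},u^{(k)}\rangle)$ appear.

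The new ingredient is the reaction term. Applying a pointwise second-order Taylor expansion of the primitive $F$ gives
$$
F(u^{(k+1)}_i)=F(u^{(k)}_i)+f(u^{(k)}_i)(u^{(k+1)}_i-u^{(k)}_i)+\tfrac{1}{2}f'(\xi_i)(u^{(k+1)}_i-u^{(k)}_i)^2,
$$
so summing over $i$ and using $|f'|_\infty\le L$ yields
$$
\Frac{1}{\epsilon^2}\langle f(u^{(k)}),u^{(k+1)}-u^{(k)}\rangle\ge \Frac{1}{\epsilon^2}\langle F(u^{(k+1)})-F(u^{(k)}),1\rangle-\Frac{L}{2\epsilon^2}\|u^{(k+1)}-u^{(k)}\|^2.
$$
Substituting this and the parallelogram identity into the energy identity, the terms $\tfrac12\langle Au^{(k+1)},u^{(k+1)}\rangle+\tfrac{1}{\epsilon^2}\langle F(u^{(k+1)}),1\rangle$ and their $k$-counterparts coalesce into $E(u^{(k+1)})-E(u^{(k)})$, and a sufficient condition for energy decrease reduces to
$$
\Frac{1}{\Delta t}\|u^{(k+1)}-u^{(k)}\|^2+\tau\langle B(u^{(k+1)}-u^{(k)}),u^{(k+1)}-u^{(k)}\rangle-\Frac{1}{2}\langle A(u^{(k+1)}-u^{(k)}),u^{(k+1)}-u^{(k)}\rangle-\Frac{L}{2\epsilon^2}\|u^{(k+1)}-u^{(k)}\|^2>0.
$$

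Finally we invoke hypothesis ${\cal H}$ to replace $\tau\langle B\cdot,\cdot\rangle-\tfrac12\langle A\cdot,\cdot\rangle$ by $(\tau/\beta-1/2)\langle A\cdot,\cdot\rangle$, and split into two cases exactly as in the linear proposition. When $\tau\ge\beta/2$ the coefficient is nonnegative so we bound $\langle A\cdot,\cdot\rangle$ from below by $\lambda_{min}\|\cdot\|^2$, producing the dichotomy in the first bullet: the scheme is unconditionally stable if $(\tau/\beta-1/2)\lambda_{min}\ge L/(2\epsilon^2)$, and otherwise stable when $\Delta t$ satisfies the stated bound. When $\tau<\beta/2$ the coefficient is negative, so we must bound $\langle A\cdot,\cdot\rangle$ from above by $\rho(A)\|\cdot\|^2$, producing the third bullet.

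The main obstacle I expect is the careful discretisation of the Taylor remainder: strictly speaking one needs $F$ to be applied componentwise against a (positive) discrete integration weight, so that $\langle F(u),1\rangle$ is the correct discrete analogue of $\int F(u)$. Provided one works with a uniform grid (or a positive quadrature) and a $\mathcal{C}^1$ nonlinearity, the Taylor estimate with $\|f'\|_\infty\le L$ goes through unchanged, and the rest is just bookkeeping of the two sign cases on $(\tau/\beta-1/2)$.
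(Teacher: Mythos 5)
Your proposal is correct and follows essentially the same route as the paper's own proof: the scalar product with $u^{(k+1)}-u^{(k)}$, the componentwise Taylor--Lagrange expansion of the primitive $F$ controlled by $\mid f'\mid_\infty\le L$, the parallelogram identity for the $A$-term, hypothesis ${\cal H}$ to convert the $B$-term, and the same two-case split using $\lambda_{min}$ or $\rho(A)$ according to the sign of $\tau/\beta-1/2$. No substantive difference to report.
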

\begin{proof}
We have, taking the scalar product in $\R^N$  of each element of the with $u^{(k+1)}-u^{(k)}$:
$$
\begin{array}{ll}
\Frac{1}{\Delta t} \parallel u^{(k+1)}-u^{(k)} \parallel^2
+\tau <B(u^{(k+1)}-u^{(k)}),u^{(k+1)}-u^{(k)}> =& -\Frac{1}{\epsilon^2}<f(u^{(k)},u^{(k+1)}-u^{(k)}> \\
&-<Au^{(k)},u^{(k+1)}-u^{(k)}>.\\
\end{array}
$$
Let us now consider the nonlinear term.  We have using Taylor-Lagrange's expansion
$$
F(u^{(k+1)}_i)-F(u^{(k)}_i)=f(u^{(k)}_i) (u^{(k+1)}_i-u^{(k)}_i) +\Frac{1}{2}f'(\xi_i) (u^{(k+1)}_i-u^{(k)}_i)^2.
$$
Taking the sum of these term for $i=1,\cdots, N$, we obtain
$$
<F(u^{(k+1)},1>-<F(u^{(k)},1>=<f(u^{(k)},u^{(k+1)}-u^{(k)}>+\Frac{1}{2}
\displaystyle{\sum_{i=1}^N f'(\xi_i) (u^{(k+1)}_i-u^{(k)}_i) ^2}.
$$
so that
$$
-\Frac{1}{\epsilon^2}<f(u^{(k)},u^{(k+1)}-u^{(k)}>
\le -\Frac{1}{\epsilon^2}(<F(u^{(k+1)},1>-<F(u^{(k)},1>)  
+\Frac{L}{2\epsilon^2} \parallel u^{(k+1)}-u^{(k)} \parallel^2.
$$
The other terms are treated exactly as in the linear case, and we use the identity
$$
<Au^{(k)},u^{(k+1)}-u^{(k)}>=\Frac{1}{2}\left( <Au^{(k+1)},u^{(k+1)}>-<A(u^{(k)},u^{(k)}>+<A(u^{(k+1)}-u^{(k)}),u^{(k+1)}-u^{(k)})>\right).
$$
Making use of these results, we find, after the usual simplifications
$$
\left(\Frac{1}{\Delta t}-\Frac{L}{2\epsilon^2}\right)\parallel u^{(k+1)}-u^{(k)} \parallel^2
+\left( \Frac{\tau}{\beta}-\Frac{1}{2}\right)<A(u^{(k+1)}-u^{(k)}),u^{(k+1)}-u^{(k)})>
+E(u^{(k+1)})-E(u^{(k)})\le 0
$$
Hence the stability is obtained when
$$
\left(\Frac{1}{\Delta t}-\Frac{L}{2\epsilon^2}\right)\parallel u^{(k+1)}-u^{(k)} \parallel^2
+\left( \Frac{\tau}{\beta}-\Frac{1}{2}\right)<A(u^{(k+1)}-u^{(k)}),u^{(k+1)}-u^{(k)})> >0.
$$
Hence the result.
\end{proof}

We point ou that in the linear case ($f=0$ and $L=0$), we recover the stability conditions given by poposition \ref{RSS_Stab_lin}. 

As an application, we can consider the simualtion of the Allen-Cahn equation
\begin{eqnarray}
\Frac{\partial u}{\partial t} -\Delta u+\Frac{1}{\epsilon^2}f(u)=0 & x\in \Omega, t >0,\\
\Frac{\partial u}{\partial n}=0& x \in \partial \Omega , \forall t >0,\\
u(x,0)=u_0(x) & x\in \Omega.
\end{eqnarray}
\\
This reaction-diffusion equation describes the process of phase separation in  many situations, \cite{AllenCahn}.
In practice, we will chose $f(u)=u(u^2-1)$. 

Notice that Shen {\it et al }\cite{JShenACCH} have proposed the scheme
 \begin{equation}
 \Frac{u^{(k+1)}-u^{(k)}}{\Delta t} +\Frac{S}{\epsilon^2} (u^{(k+1)}-u^{(k)}) +Au^{(k+1)}+\Frac{1}{\epsilon^2}f(u^{(k)}) =0.
 \label{ShenAC}
 \end{equation}

With Theorem \ref{Stab_AC}, we recover the stability conditions proposed by J. Shen when $A=B$ and $\tau=1$ and $S=0$ ; The term $\Frac{S}{\epsilon^2} (u^{(k+1)}-u^{(k)})$ plays the role of the stabilizator, following the same principle as the schemes introduced in \cite{BCostaPHD,CDGT}. The time restriction become harder when $\epsilon$
takes small values. This situation motivates the use of stabilized schemes.
Now, if $S>\Frac{L}{2}$, the scheme (\ref{ShenAC}) is unconditionally stable. This is to be compared with the RSS scheme (\ref{RSSAC}). We find as inconditional stability condition
$$
\tau > \Frac{\beta L}{2\lambda_{min}}+\Frac{\beta\epsilon^2}{2},
$$
which is a comparable condition for $\epsilon$ small enough and $\beta$ bounded, since in pratice $\lambda_{min}$ is a positive constant which not depend on the dimension of the problem,  the first eigenvalue of the stifness matrix is indeed nicely captured by the discretization schemes.  However the additional stabilizing terms can deteriorate the consistency. 
%
\subsection{Richardson extrapolation}
In \cite{AverbuchCohenIsraeli} the authors have proposed to increase
the accuracy of the stabilized scheme by smoothing the residual
using a Richardson extrapolation process:\\
The solution of
\begin{eqnarray*}
\Frac{d u }{dt}=F(u),
\end{eqnarray*}
by the forward Euler scheme defines the iterations
\begin{eqnarray*}
u^{k+1}=u^{k}+\Delta t F(u^k)=G_{\Delta t}(u^k).
\end{eqnarray*}
The smoothed sequence is defined by
\begin{eqnarray*}
v_1=G_{\Delta t}(u^k),\\
v_{2,0}=G_{\Delta t /2}(u^k),\\
v_{2,1}=G_{\Delta t /2}(v_{2,0}),\\
u^{k+1}=2v_{2,1}-v_1.
\end{eqnarray*}
It is second order accurate in time. The accuracy of the stabilized
scheme is increased by applying the extrapolation to the iteration
operator
$$
G_{\Delta t,\tau}(u^k)=u^k +\Delta t (Id +\tau \Delta t
B)^{-1}F(u^k).
$$
The improvement is studied analytically in \cite{RibotSchatzman2}, but
numerical evidences point out the efficiency of the approach, see
also the numerical results presented in Section 4.

Below the Extrapoled RSS scheme\\
 \begin{center}
\begin{minipage}[H]{12cm}
  \begin{algorithm}[H]
    \caption{: Extrapoled RSS Scheme}\label{ExtraRSS}
    \begin{algorithmic}[1]
        \State $u^{(0)}$ given\\
            \For{$k=0,1, \cdots$until convergence}
             \State {\bf Solve} $ (Id+\tau \frac{\Delta t}{2}B) v_1=-\frac{\Delta t}{2} F(u^{(k})$
              \State {\bf Set} $u_1=u^{(n)} +v_1$
               \State {\bf Solve} $ (Id+\tau \frac{\Delta t}{2}B) v_2=-\frac{\Delta t}{2} F(u_1)$
              \State {\bf Set}  $u_2=u_1+v_2$
               \State {\bf Solve} $(Id+\tau \Delta tB) v_3=-\Delta t  F(u^{(k)})$
               \State {\bf Set} $u_3=u^{(n)}+v_3$
               \State  {\bf Set}  $u^{(k+1)}=2u_2-u_3$              
            \EndFor
    \end{algorithmic}
    \end{algorithm}
\end{minipage}
\end{center}
\section{Discretisation in space and preconditioning}
\subsection{Compact FD Scheme Discretization}
A way to obtain a high level of accuracy with a finite difference scheme, that can be compared with the spectral one, is to implement finite difference compact schemes \cite{Lele}. These schemes consist in approaching a linear operator (differentiation, interpolation) by a rational (instead of polynomial-like) finite differences scheme. 
We describre briefly here only their construction for the approximation of the first and the second derivative and we refer to \cite{Lele} for more details. We first consider the schemes in space dimension one.

Let $U=(U_1,\cdots,U_n)^T$ denotes a vector whose the components are the approximations of a regular function $u$ at (regularly spaced) grid points $x_i=ih$, $i=1,\cdots, n$.  We compute approximations of $V_i={\cal L}(u)(x_i)$ as solution of a system
$$
P . V= Q U,
$$
so the approximation matrix is formally $B=P^{-1}Q$.
When $P=Id$, the scheme is explicit and we recover the framework of classical finite difference schemes;  when $P\neq Id$, the scheme is implicit an dit is possible to reach high order accuracy, the implicity allows to mimmic the spectral global dependence. In practice, $P$ is a banded sparse matrix easy to invert and very well conditioned making the compact scheme approach robust and not costly regardless to the precision brought.
We here give the matrices $P$ and $Q$ for the fourth order approximation of the first and the second derivative, details can be found in \cite{Lele}.

Let us begin with the first derivative. We have

$$P=\begin{pmatrix}
1 & \frac{1}{4} &   &   \\ 
\frac{1}{4} & 1 & \ddots &   \\ 
  & \ddots & \ddots & \frac{1}{4} \\ 
  &   & \frac{1}{4} & 1
\end{pmatrix}, $$

$$Q=\dfrac{1}{2h} \begin{pmatrix}
a_1 & a_2 & a_3 & a_4 &   \\ 
-\frac{3}{2} & 0 & \frac{3}{2} &   &   \\ 
  & \ddots & \ddots & \ddots &   \\ 
  &   & -\frac{3}{2} & 0 & \frac{3}{2} \\ 
  & -a_4 & -a_3 & -a_2 & -a_1
\end{pmatrix}, $$

with $a_1=-2$, $a_2=3$, $a_3=-\frac{2}{3}$ and $a_4=\frac{1}{8}$.
\\
\\
In the same way, we can build the fourth order compact schemes for the second order derivative. We first
consider the compact scheme asociated to the discretization of the second derivative with homogeneous Dirichlet boundary conditions. We have

with
$$ P= \begin{pmatrix}
1 & \frac{1}{10} &   &   &   \\ 
\frac{1}{10} & 1 & \frac{1}{10} &   &   \\ 
  & \ddots & \ddots & \ddots &   \\ 
  &  & \frac{1}{10} & 1 & \frac{1}{10} \\ 
 &  &  & \frac{1}{10} & 1
\end{pmatrix}, $$

and $$ Q = \frac{1}{h^2} \begin{pmatrix}
a_1 & a_2 & a_3 & a_4 & a_5 &   &   \\ 
-\frac{6}{5} & \frac{12}{5} & -\frac{6}{5} &   &   &   &   \\ 
  & -\frac{6}{5} & \frac{12}{5} & -\frac{6}{5} &   &   &   \\ 
  &   & \ddots & \ddots & \ddots &   &   \\ 
  &   &   & -\frac{6}{5} & \frac{12}{5} & -\frac{6}{5} &   \\ 
  &   &   &   & -\frac{6}{5} & \frac{12}{5} & -\frac{6}{5} \\ 
  &   & a_{N-4} & a_{N-3} & a_{N-2} & a_{N-1} & a_N
\end{pmatrix}, $$
here the constant  $a_1$, $a_2$, $a_3$, ... are given by

$$ \left\lbrace \begin{array}{rcccl}
a_1&=&a_{N}&=&-\frac{67}{60},   \\
a_2&=&a_{N-1}&=&-\frac{7}{12},   \\
a_3&=&a_{N-2}&=&\frac{13}{10},   \\
a_4&=&a_{N-3}&=&-\frac{61}{120},   \\
a_5&=&a_{N-4}&=&\frac{1}{12}.   \\
\end{array}  \right.  $$

Now applying the same approach, we can consider fourth order compact schemes for the second derivative with associated homogeneous Neumann Boundary conditions

$$ M= \begin{pmatrix}
1 & \frac{1}{10} &   &   &   \\ 
\frac{1}{10} & 1 & \frac{1}{10} &   &   \\ 
  & \ddots & \ddots & \ddots &   \\ 
  &  & \frac{1}{10} & 1 & \frac{1}{10} \\ 
 &  &  & \frac{1}{10} & 1
\end{pmatrix}, $$ 

and $$ N = \frac{1}{h^2} \begin{pmatrix}
a_1 & a_2 & a_3 & a_4 & a_5 &   &   \\ 
-\frac{6}{5} & \frac{12}{5} & -\frac{6}{5} &   &   &   &   \\ 
  & -\frac{6}{5} & \frac{12}{5} & -\frac{6}{5} &   &   &   \\ 
  &   & \ddots & \ddots & \ddots &   &   \\ 
  &   &   & -\frac{6}{5} & \frac{12}{5} & -\frac{6}{5} &   \\ 
  &   &   &   & -\frac{6}{5} & \frac{12}{5} & -\frac{6}{5} \\ 
  &   & a_{N-4} & a_{N-3} & a_{N-2} & a_{N-1} & a_N
\end{pmatrix}, $$

with

$$ \left\lbrace \begin{array}{rcccl}
a_1&=&a_{N}&=&\frac{2681}{480},   \\
a_2&=&a_{N-1}&=&-\frac{32}{3},   \\
a_3&=&a_{N-2}&=&\frac{113}{40},   \\
a_4&=&a_{N-3}&=&-\frac{13}{15},   \\
a_5&=&a_{N-4}&=&\frac{59}{480}.   \\
\end{array}  \right.  $$

To obtain the compact schemes of first and second order derivative in space dimension 2 and 3, it suffices to use 
the previous schemes and to expand them tensorally.\\
The finite differences discretization matrices of derivative on cartesian domains can be obtained by those on the interval using Kronecker products. Indeed, if $A^N_{xx}$ denotes the discretization matrix on $[0,1]$ associated to Dirichlet Boundary conditions, using $N$ internal discretization points, then 
$$
Id_M\otimes A^N_{xx}
$$
will be the discretization matrix of the same operator but on a grid composed of $N\times M$ points, the corresponding laplacian matrix will be $A^M_{xx}\otimes Id_N+Id_M\otimes A^N_{xx}$. We recall that the Kronecker product of a $m\times n$ matrix $A$ by a $p\times q$ matix $B$ is defined as
$$
A \otimes B= \begin{pmatrix}
a_{11} B&\cdots &  a_{1n} B \\ 
\vdots& \ddots&\vdots\\
 a_{m1} B&\cdots &  a_{mn} B \\ 
\end{pmatrix}. $$
\subsection{Preconditioning FD Compact schemes using second order FD}
The matrices associated to compact finite difference schemes are full, this is due to the implicit nature of the scheme.
A natural idea to built a sparse preconditioner is to use the matrix obtained by applying a lower accurate finite discretization scheme, particularly a second order one. We here describe the approach for one dimensional problem, extensions to higher dimensions are obtained using kronecker products, also we restrict to linear problem for simplicity. Let $A_2$ (resp $A_4$) be the second order (resp. the fourth order) discretization matrice  of $-\Delta$ on a regular grid composed of $N$ internal points. The RSS scheme writes as
\begin{equation}
\Frac{u^{(k+1)}-u^{(k)}}{\Delta t}+\tau A_2(u^{(k+1)}-u^{(k)})+A_4u^{(k)}=f.
\end{equation}
\\

The numerical treatment of non homogeneous (possibly time depending) Dirichlet boundary conditions can be realized with the RSS approach.  Indeed, let us note $A_m(u,n)$, $m=2,4$, the m$th$ order finite difference discretization of $-\Delta$ of $u$  with Dirichlet conditions at time $n\Delta t$, note that this operator is affine. The stabilized scheme writes  formally as
\begin{equation}
\Frac{u^{(k+1)}-u^{(k)}}{\Delta t}+\tau
(A_2(u^{(k+1)},k+1)-A_2(u^{(k)},k))+A_4(u^{(k)},k)=f,
\end{equation}
Making the approximation $A_2(u^{(k+1)},k+1)\simeq A_2(u^{(k+1)},k)$, we obtain
\begin{equation}
\Frac{u^{(k+1)}-u^{(k)}}{\Delta t}+\tau
A_2(u^{(k+1)}-u^{(k)})+A_4(u^{(k)},k)=f.
\end{equation}

It is to be pointed out that for the solutions of 2D and 3D Poisson problems,  the number of iterations to  the convergence is not dependent on the dimension of the system. Also, in the case of Poisson-like problem, we can use FFT to solve the preconditioning system, speeding up the resolution of the original linear system. This approach will be followed also  for the fast solution of the heat equation that arises in parabolic problems.
\begin{remark}
A analogous approach have been done in the context of hierarchical preconditioners in finite differences, where the fourth order discretization matrix of $-\Delta$ was preconditioned by the hierarchical transfert matrix attached to the second order accurate discretization of $-\Delta$, see \cite{ChehabCS}.
\end{remark}

Below, we report numerical results on the solution of 2D and 3D Poisson problems when discretized by fourth order compact schemes. The preconditioning matrix is obtained by applying  corresponding second order finite difference schemes. We took a random r.h.s {\tt b=1-2*rand(N,1)} so that many frequencies including high ones are present. The initial data is $u=0$, the tolerance parameter $10^{-12}$. The result we report is the maximum number of external iterations to convergence, on 5 independent numerical resolutions, the number of discretization point per direction $n$  is reported as $(n)$.   The stiffnes matrices are then of respective sizes $n^2\times n^2$ (2D problem) and
$n^3\times n^3$ (3D problem).

\begin{table}[!h]
\begin{center}

\begin{tabular}{|c||c|c|c|c|c|c|}
\hline 
 Problem & $\#$ it. (n) &  $\#$ it. (n) ) & $\#$ it. (n) & $\#$ it. (n) & $\#$it. (n) & $\#$it. (n)  \\
 \hline
 Poisson 2D & 12 (n=15) &  11  (n=31) &  10 (n=63)  & 10 (n=127) & 9 (n=255) & 8 (n=511)\\
 \hline
 Poisson 3D & 12 (n=15) &  11  (n=31) &  11 (n=63) & & &\\
\hline 
\end{tabular} 
\caption{Solutions of 2D and 3D Poisson problem with GMRES and second order preconditioner}
\label{PrecondPoisson}
\end{center}
\end{table}

Here, the fourth order discretization matrix $A$ of $-\Delta$ is nonsymmetric while the preconditioning matrix $B$ is.
However, in practice the RSS method is still efficient. This is due to the small size of the skewsymmetric part of $A$. Indeed, denoting by $\delta= \parallel A-A^T\parallel$, we can prove the following general stability result.
\begin{theorem}
Let $A \in {\cal M}_{n}({\mathbb R}^N)$. We assume that $A$ is positive definite and $B$ a symmetric definite positive preconditioning matrix of $A$ satisfy hypothesis ${\cal H}$. We set
$\delta= \parallel A-A^T\parallel$ and $\Phi(\xi)= (\beta^2-2\alpha \tau)\xi + \Frac{1}{4\xi}\delta^2$.
Assume that $\Frac{\beta^2}{2\alpha}-\Frac{\delta^2}{8\alpha \lambda_{min}(B)^2} \ge 0$. Then the RSS scheme has the following stability conditions
\begin{itemize}
\item[i.] if $
\tau \ge \Frac{\beta^2}{2\alpha}+\Frac{\delta^2}{8\alpha \lambda^2_{min}(B)} \ge \Frac{\beta^2}{2\alpha}.
$
then the scheme is unconditionally stable.
\item [ii.] If $\tau \le \Frac{\beta^2}{2\alpha}-\Frac{\delta^2}{8\alpha \lambda_{max}(B)^2}$ then the scheme is table under condition
$$
0<\Delta t < \Frac{2\alpha}{\Phi(\lambda_{max}(B))}
$$
\item[iii.] If $\Frac{\beta^2}{2\alpha}-\Frac{\delta^2}{8\alpha \lambda_{max}(B)^2}\le \tau < 
\Frac{\beta^2}{2\alpha}+\Frac{\delta^2}{8\alpha \lambda_{min}(B)^2}$
then the scheme is table under condition
$$
0<\Delta t < \Frac{2\alpha}{\Phi(\lambda_{min}(B))}
$$
\item [iv.] If $
\Frac{\beta^2}{2\alpha}-\Frac{\delta^2}{8\alpha \lambda_{min}(B)^2}< \tau <\Frac{\beta^2}{2\alpha}-\Frac{\delta^2}{8\alpha \lambda_{max}(B)^2}
$
 then the scheme is table under condition
$$
0<\Delta t < \Frac{2\alpha}{\Max(\Phi(\lambda_{min}(B)),\Phi(\lambda_{max}(B)))}
$$
\end{itemize}
Here $\lambda_{min}(B)$ (resp.  $\lambda_{max}(B)$ denotes the lowest (resp. the largest) eigenvalue of $B$.
\label{theo_gen_stab}
\end{theorem}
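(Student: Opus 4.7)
The strategy is to extend the energy argument of Proposition \ref{RSS_Stab_lin} to the nonsymmetric setting by isolating the skew part of $A$. Write $A = A_s + A_a$ with $A_s = \frac{1}{2}(A+A^T)$ symmetric positive definite (by positivity of $A$) and $A_a = \frac{1}{2}(A-A^T)$ antisymmetric; then $\|A_a\| \le \delta/2$, and hypothesis $\mathcal{H}$ transfers unchanged to $A_s$ since $\langle Av,v\rangle = \langle A_s v,v\rangle$. I would then take the scalar product of the RSS iteration with $\Delta u := u^{(k+1)} - u^{(k)}$: the symmetric contribution $-\langle A_s u^{(k)}, \Delta u\rangle$ on the right-hand side is treated exactly as in Proposition \ref{RSS_Stab_lin} via the polarization identity, which isolates the telescoping $A_s$-energy $\frac{1}{2}(\langle A_s u^{(k+1)}, u^{(k+1)}\rangle - \langle A_s u^{(k)}, u^{(k)}\rangle)$ plus a half-residue $\frac{1}{2}\langle A_s\Delta u,\Delta u\rangle$.

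The new antisymmetric residue $-\langle A_a u^{(k)}, \Delta u\rangle$ is the crux. I would estimate it by Cauchy--Schwarz (bringing in $\|A_a\| \le \delta/2$) followed by a weighted Young inequality with a free parameter $\xi > 0$, giving a bound of the form $\frac{\delta^2}{8\xi}\|u^{(k)}\|^2 + \frac{\xi}{2}\|\Delta u\|^2$. Hypothesis $\mathcal{H}$ is then used in both directions: $\alpha\lambda_{min}(B)\|v\|^2 \le \langle A_s v,v\rangle$ to convert $\|u^{(k)}\|^2$ into a multiple of $\langle A_s u^{(k)}, u^{(k)}\rangle$, and $\langle A_s\Delta u,\Delta u\rangle \le \beta\langle B\Delta u,\Delta u\rangle$ to upper bound the symmetric half-residue. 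Regrouping the terms yields a sufficient condition of the clean form
$$
\frac{2\alpha}{\Delta t} \;\ge\; (\beta^2 - 2\alpha\tau)\xi + \frac{\delta^2}{4\xi} \;=\; \Phi(\xi),
$$
and the auxiliary hypothesis $\beta^2/(2\alpha) - \delta^2/(8\alpha\lambda_{min}(B)^2) \ge 0$ is precisely what keeps this regrouping consistent across all spectral positions.

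Finally, I would optimize the Young weight $\xi$ on $[\lambda_{min}(B), \lambda_{max}(B)]$. The function $\Phi$ is convex on $(0,\infty)$ when $\beta^2 - 2\alpha\tau > 0$, with interior minimum at $\xi^\star = \delta/(2\sqrt{\beta^2-2\alpha\tau})$, and strictly decreasing with a single positive zero when $\beta^2 - 2\alpha\tau < 0$. A case analysis on the location of $\xi^\star$ (respectively the zero of $\Phi$) relative to $[\lambda_{min}(B), \lambda_{max}(B)]$ gives the four regimes: case (i), $\Phi \le 0$ on the whole spectrum of $B$, so the condition is vacuous and stability is unconditional; case (ii), $\xi^\star$ lies to the right of the spectrum so $\Phi$ is monotone on $[\lambda_{min}(B),\lambda_{max}(B)]$ and the binding constraint is at $\lambda_{max}(B)$; case (iii), $\xi^\star$ lies to the left, and the binding constraint is at $\lambda_{min}(B)$; case (iv), $\xi^\star$ sits inside the spectrum so the binding constraint is the max of $\Phi(\lambda_{min}(B))$ and $\Phi(\lambda_{max}(B))$.

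The main obstacle is the antisymmetric residue: it has no telescoping structure, so the Young weight $\xi$ must be tuned so that the leaked $\|u^{(k)}\|^2$ is absorbed into the $A_s$-energy through $\mathcal{H}$ while the leaked $\|\Delta u\|^2$ fits alongside the $\|\Delta u\|^2/\Delta t$ and $\tau\langle B\Delta u,\Delta u\rangle$ already present. Obtaining exactly the combination $(\beta^2 - 2\alpha\tau)\xi + \delta^2/(4\xi)$ with the precise coefficients comes from carefully balancing the one-sided bounds $\alpha\langle Bv,v\rangle \le \langle A_s v,v\rangle \le \beta\langle Bv,v\rangle$; the four cases then follow by elementary calculus on $\Phi$.
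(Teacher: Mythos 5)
Your endgame---reducing everything to the scalar function $\Phi(\xi)=(\beta^2-2\alpha\tau)\xi+\delta^2/(4\xi)$ on $[\lambda_{min}(B),\lambda_{max}(B)]$ and doing elementary calculus on the convex function $\Phi$---matches the paper. But the route you take to the inequality $2\alpha/\Delta t\ge\Phi(\xi)$ has a genuine gap. In the energy argument, the skew residue $-\langle A_a u^{(k)},\Delta u\rangle$ leaks, after Cauchy--Schwarz and Young, a term $\frac{\delta^2}{8\xi}\|u^{(k)}\|^2$ proportional to the solution itself, not to the increment. The telescoping identity only supplies nonnegative quantities in $\|\Delta u\|^2$ and $\langle B\Delta u,\Delta u\rangle$ with which to absorb errors; a term in $\|u^{(k)}\|^2$ can only be folded into the energy $E^{(k)}=\frac{1}{2}\langle A_s u^{(k)},u^{(k)}\rangle$ itself, which yields $E^{(k+1)}\le(1+c\,\Delta t)E^{(k)}$ with $c>0$, i.e.\ bounded growth on finite time intervals, not the decay the theorem asserts. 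You also cannot bound $\|u^{(k)}\|$ by $\|\Delta u\|$ (near a steady state $\Delta u$ is arbitrarily small while $u^{(k)}$ is not), so the leak is fatal to a monotone-energy proof. This is exactly why the paper abandons the energy method for this theorem and instead works with the iteration matrix $M=Id-\Delta t\,(Id+\tau\Delta t B)^{-1}A$: it takes a complex eigenpair $Mv=\lambda v$, $\lambda=a+ib$, $v=v_1+iv_2$, and imposes $a^2+b^2<1$; the real part of $\lambda$ carries $\langle Av_1,v_1\rangle+\langle Av_2,v_2\rangle$ and the imaginary part carries $\langle(A-A^T)v_1,v_2\rangle$, which is where $\delta$ enters.

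A second, related problem: in the paper $\xi$ is not a free Young weight but the Rayleigh-type quotient $\xi=\bigl(\langle Bv_1,v_1\rangle+\langle Bv_2,v_2\rangle\bigr)/\bigl(\|v_1\|^2+\|v_2\|^2\bigr)$ attached to the unknown eigenvector. It is therefore adversarial, and one must require $\Delta t\,\Phi(\xi)<2\alpha$ for \emph{every} $\xi\in[\lambda_{min}(B),\lambda_{max}(B)]$, i.e.\ $\Delta t<2\alpha/\max\Phi$. If $\xi$ were a tunable parameter, as in your setup, you would \emph{minimize} $\Phi$, there would be no reason to confine $\xi$ to the spectrum of $B$, and in case (iv) you would land on $2\alpha/\Phi(\xi^\star)$ at the interior minimizer rather than on $2\alpha/\max\bigl(\Phi(\lambda_{min}(B)),\Phi(\lambda_{max}(B))\bigr)$. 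Your own case (iv), which takes the maximum, is inconsistent with treating $\xi$ as a free weight; it only makes sense once $\xi$ is reinterpreted as the eigenvector-dependent quotient of the paper's spectral argument.
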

\begin{proof}
The RSS scheme reads as
$$
u^{(k+1)}=u^{(k)} -\Delta t \left(Id + \tau \delta t B\right)^{-1}Au^{(k)}=Mu^{(k)} ,
$$
and is stable under the necessary and sufficient condition $\rho(M)<1$; in the general case the eigenvalues of $M$ can be complex.
Let $v\in {\mathbb C}^N$ be an eigenvector of $M$ associated to the eigenvalue $\lambda=a+ib$.
We have
$$
\lambda v=Mv,
\mbox{ so } 
(1-\lambda)\left(Id + \tau \delta t B\right)v=\Delta t  Av .
$$
We decompose $v$ as $v=v_1+iv_2$ and, separating real and imaginary parts, we obtain
$$
\begin{array}{ll}
Av_1&=\Frac{1-a}{\Delta t}\left( v_1+\tau \Delta t Bv_1\right) +
\Frac{b}{\Delta t}\left( v_2+\tau \Delta t Bv_2\right),\\
Av_1&=\Frac{1-a}{\Delta t}\left( v_2+\tau \Delta t Bv_2\right) -
\Frac{b}{\Delta t}\left( v_1+\tau \Delta t Bv_1\right).\\
\end{array}
$$
We have then, on the one hand
$$
<Av_1,v_1>+<Av_2,v_2>=\Frac{1-a}{\Delta t}\left(  \parallel v_1\parallel^2+\parallel v_2\parallel^2+\tau \Delta t <Bv_1,v_1>+\tau \Delta t <Bv_2,v_2>\right),
$$
and, on the other hand,
$$
<Av_1,v_2>-<Av_2,v_1>=
\Frac{b}{\Delta t}\left( \parallel v_1\parallel^2+\parallel v_2\parallel^2+\tau \Delta t <Bv_1,v_1>+\tau \Delta t <Bv_2,v_2>\right).
$$
We now set for convenience $N(v_1,v2)=\parallel v_1\parallel^2+\parallel v_2\parallel^2+\tau \Delta t <Bv_1,v_1>+\tau \Delta t <Bv_2,v_2>$. 
We infer from the previous identities
$$
a=1-\Frac{\Delta t }{N(v_1,v_2)} \left( <Av_1,v_1>+<Av_2,v_2> \right)
\mbox{ and }
b=\Frac{\Delta t }{N(v_1,v_2)}<(A-A^T)v_1,v_2>.
$$
The stability condition is
\begin{eqnarray}
\label{gen_stab}
\eta=a^2+b^2<1.
\end{eqnarray}
After the usual simplifications, we find  as a sufficient condition
$$
\begin{array}{ll}
\eta &\le 1-2\Frac{\Delta t}{N(v_1,v_2)}\alpha \left( <Bv_1,v_1>+<Bv_2,v_2>\right)\\
&+\Frac{\Delta t^2}{N(v_1,v_2)^2}\beta^2  \left( <Bv_1,v_1>+<Bv_2,v_2>\right)^2\\
&+\Frac{\Delta t^2}{4N(v_1,v_2)^2}\parallel A-A^T\parallel^2\left( \parallel v_1\parallel^2+ \parallel v_2\parallel^2\right)^2. \\
\end{array}
$$
We now let $Z=<Bv_1,v_1>+<Bv_2,v_2>$ and $Y=\parallel v_1\parallel^2+ \parallel v_2\parallel^2$. The last inequality reads
$$
\Delta t \left( (\beta^2-2\alpha \tau)Z^2 +\Frac{1}{4}\parallel A-A^T\parallel^2 Y^2\right) < 2\alpha YZ .
$$
At this point, we set $\xi=\Frac{Z}{Y}=\Frac{<Bv_1,v_1>+<Bv_2,v_2>}{\parallel v_1\parallel^2+ \parallel v_2\parallel^2}$. Note that we have $\lambda_{min}(B)\le \xi \le \lambda_{max}(B)$.
After usual simplifications, the sufficient stability condition (\ref{gen_stab}) writes now as
$$
\Delta t \left( (\beta^2-2\alpha \tau)\xi + \Frac{1}{4}\delta^2\Frac{1}{\xi}\right) < 2\alpha,
$$
where we have set $\delta =\parallel A-A^T\parallel$.
We use the function $\Phi(\xi)= (\beta^2-2\alpha \tau)\xi + \Frac{1}{4\xi}\delta^2$ ; $\Phi$ is obviously regular on $[\lambda_{min}(B),\lambda_{max}(B)]$, since $\lambda_{min}(B) >0$, $B$ is assumed to be SPD.
We deduce the following sufficient stability conditions
\begin{itemize}
\item if $\Phi(\xi) \le 0, \forall \xi \in [\lambda_{min}(B),\lambda_{max}(B)]$, the scheme is unconditionnaly stable
\item if there exists $\xi \in [\lambda_{min}(B),\lambda_{max}(B)]$ such that $\Phi(\xi) >0$, then a sufficient stablility condition is
$$
0<\Delta t<\Frac{2\alpha}{\displaystyle{\Max_{\xi \in [\lambda_{min}(B),\lambda_{max}(B)]}\Phi(\xi)}}.
$$
\end{itemize}
We conclude by studying the two cases.
\\
\\
\underline{Unconditional stablity }:
To have $\Phi(\xi) \le 0, \forall \xi \in [\lambda_{min}(B),\lambda_{max}(B)]$, we must have
$\Phi(\lambda_{min}(B))\le 0$ and $\Phi(\lambda_{max}(B))\le 0$, that is
$$
\tau \ge \Frac{\beta^2}{2\alpha}+\Frac{\delta^2}{8\alpha \lambda^2_{min}(B)} \ge \Frac{\beta^2}{2\alpha}.
$$
We now remark that $\Phi'(\xi)=(\beta^2-2\alpha \tau) -\Frac{\delta^2}{4\xi^2} \le 0 \iff 
\tau \ge \Frac{\beta^2}{2\alpha}-\Frac{\delta^2}{8\alpha \xi^2}, \forall \xi \in [\lambda_{min}(B),\lambda_{max}(B)]$ . This is satisfied under the previous hypothesis, hence the first statement $[i.]$ of the theorem.
\\
\\
\underline{Conditional stability} : The condition is $\Max(\Phi(x)) >0$. We distinguish two cases
\begin{itemize}
\item $\Phi$ is monotone.
\begin{itemize}
\item $\Phi'(\xi) \ge 0$  and $\Phi(\lambda_{max}(B))>0$ ie
$\tau < \Frac{\beta^2}{2\alpha}+\Frac{\delta^2}{8\alpha \lambda_{max}(B)^2}$ and $\tau \le \Frac{\beta^2}{2\alpha}-\Frac{\delta^2}{8\alpha \lambda_{max}(B)^2}$. A sufficent stability condition is then $\tau \le \Frac{\beta^2}{2\alpha}-\Frac{\delta^2}{8\alpha \lambda_{max}(B)^2}$ and
$$
0<\Delta t < \Frac{2\alpha}{\Phi(\lambda_{max})},
$$
\item $\Phi'(\xi) \le 0$ and $\Phi(\lambda_{min}(B))>0$ that is $\Frac{\beta^2}{2\alpha}-\Frac{\delta^2}{8\alpha \lambda_{max}(B)^2}\le \tau < 
\Frac{\beta^2}{2\alpha}+\Frac{\delta^2}{8\alpha \lambda_{min}(B)^2}$
and
$$
0<\Delta t < \Frac{2\alpha}{\Phi(\lambda_{min}(B))},
$$
\end{itemize}
\item $\Phi'(\xi)=0$ for $\xi\in  ]\lambda_{min}(B),\lambda_{max}(B)[$. This means that
$$
\Frac{\beta^2}{2\alpha}-\Frac{\delta^2}{8\alpha \lambda_{min}(B)^2}< \tau <\Frac{\beta^2}{2\alpha}-\Frac{\delta^2}{8\alpha \lambda_{max}(B)^2}.
$$
This implies that $\Phi'(\lambda_{min}(B))<0$ so $\Max(\Phi(\xi))$ is reached at $\lambda_{min}(B)$ or at
$\lambda_{max}(B)$ and the stability condition is
$$
0<\Delta t < \Frac{2\alpha}{\Max(\Phi(\lambda_{min}(B)),\Phi(\lambda_{max}(B)))}.
$$
\end{itemize}
\end{proof}
We point out that the symmetric part of matrix $A$ is dominant for small values of $\delta$ and that in this case the above stability result is to be compared with that of Proposition (\ref{RSS_Stab_lin}). Particularly inconditional stability condition is obtained for $\tau \ge \Frac{\beta^2}{2\alpha}  +\Frac{\delta^2}{8\alpha \lambda^2_{min}(B)} \simeq  \Frac{\beta}{2}$, for inconditional preconditioners $B$ as those presented above.
\section{Numerical Results}
\subsection{The problem and the numerical schemes}
As stated in the introduction, we here aim at computing the steady state of the so-called 2D driven cavity problem in a rectangular domain $\Omega$. 
The steady state will be reached by a pseudo-temporal method, and for that purpose we consider the stream function-vorticity formulation ($\omega-\psi$) (see \cite{PeyretTaylorBook,TemamNSE} and the references therein):
\begin{eqnarray}
\label{Navier_Stokes_psi}
\Frac{\partial \omega}{\partial t}-\Frac {1}{Re}\Delta \omega
 +\Frac {\partial \phi}{\partial y}
\Frac {\partial \omega}{\partial x} - \Frac {\partial \phi}{\partial
x}
\Frac {\partial \omega}{\partial y}=0, & \mbox{ in }  \Omega\\
\Delta \psi =\omega, & \mbox{ in }   \Omega \\
\omega(x,y,0)=\omega_0(x,y),
\end{eqnarray}
that we supplement with proper boundary conditions. We denote by ${\Gamma}_{i} \
\ i=1,..,4$ the sides of the unit square $\Omega$ as follows:
${\Gamma}_{1}$ is the lower horizontal side, ${\Gamma}_{3}$ is the
upper horizontal side, ${\Gamma}_{2}$ is the left vertical side, and
${\Gamma}_{4}$ is the right vertical side.

\begin{figure}[h]
\begin{center}
\includegraphics[height=4.in,width=4.in]{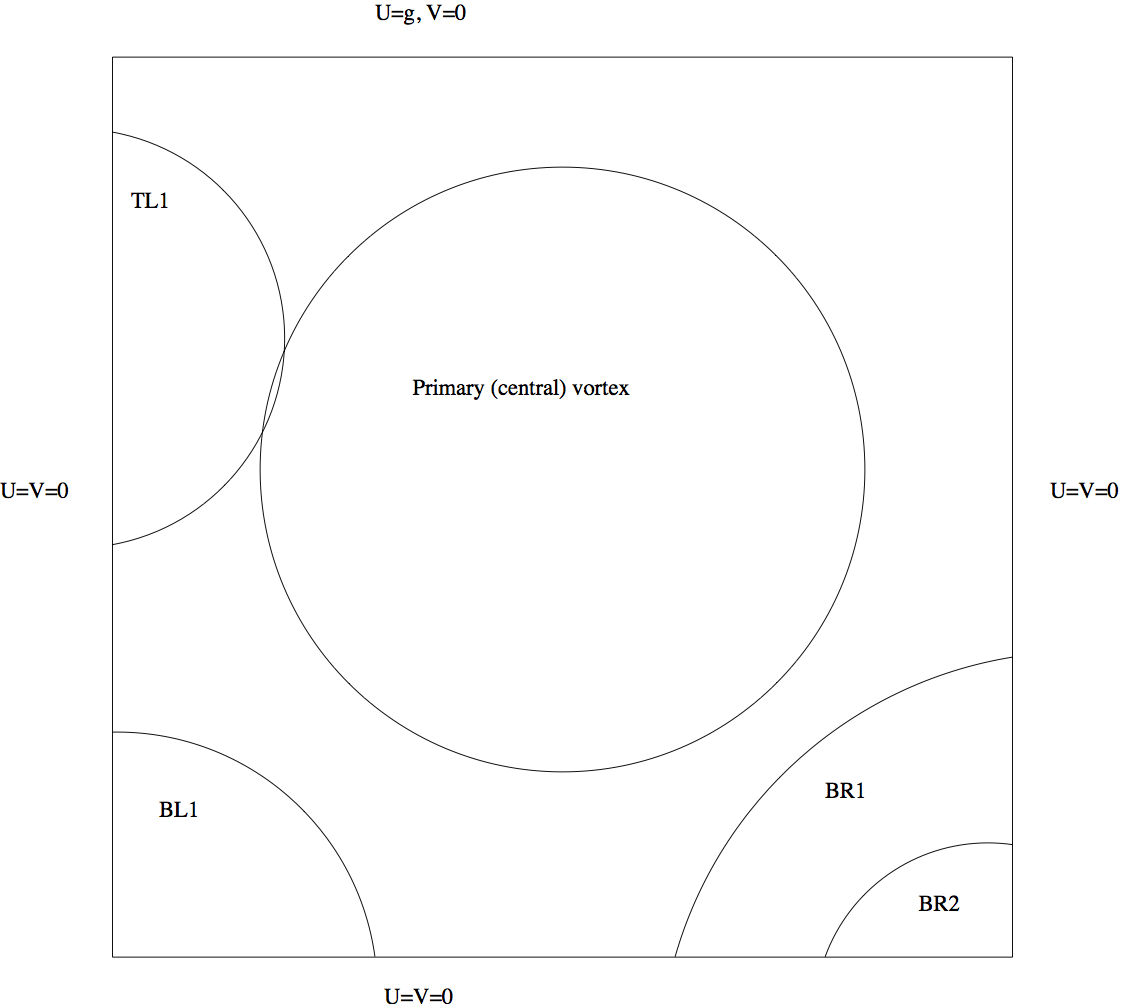}
\end{center}
\caption{The lid driven cavity - Schematic localization of the mean
vortex regions}
\end{figure}

We distinguish two different driven flows, according to the choice of
the boundary conditions on the velocity. More precisely we have
\begin{itemize}
\item $g(x)=1$ : Cavity A (lid driven cavity)
\item $g(x)=(1-(1-2x)^2)^2$ : Cavity B (regularized lid driven cavity)
\end{itemize}
We will also consider the steady  linear part of this equation (the Stokes problem), whose the solution will be chosen as the initial condition of (\ref{Navier_Stokes_psi})
\begin{eqnarray}
\label{Stokes_psi}
-\Frac {1}{Re}\Delta \omega=0, &\mbox{ in }  \Omega\\
\omega\mid_{\partial \Omega}=\Delta \psi \mid_{\partial \Omega} & \mbox{on} \partial \Omega,\\
\Delta \psi =\omega, &\mbox{ in } \Omega\\
\psi\mid_{\partial \Omega}=0 & \mbox{ on } \partial \Omega,
\end{eqnarray}
The RSS scheme is based on two different finite differences discretization of differential operators at the same grid points: a second order finite difference scheme will be used for preconditiong while the fourth order compact scheme is
implemented for the effective approximation to the solution.

The boundary conditions on $\omega$ are derived by the
discretization of $\Delta \psi$ on the boundaries. With the
conditions on $u$ and $v$ we have
$$
\begin{array}{ll}
\omega(x,0,t) = \Frac {\partial^2 \psi}{\partial y^2}(x,0,t) &
\mbox{ on } \Gamma_1,\\
\omega(x,1,t) = \Frac {\partial^2 \psi}{\partial y^2}(x,1,t) &
\mbox{ on } \Gamma_3,\\
\omega(0,y,t) = \Frac {\partial^2 \psi}{\partial x^2}(0,y,t) &
\mbox{ on } \Gamma_2,\\
\omega(1,y,t) = \Frac {\partial^2 \psi}{\partial x^2}(1,y,t) &
\mbox{ on } \Gamma_4.
\end{array}
$$
So, since $\psi_{\partial \Omega}=0$ and $u=\Frac {\partial
\psi}{\partial y}$, $v=-\Frac {\partial \psi}{\partial x}$ , we
obtain by using Taylor expansions
\begin{eqnarray}
\begin{array}{ll}
\omega_{i,0} &=
\Frac { \psi_{i,1}-8\psi_{i,2}}{2h^2}\label{BC1},\\
\omega_{i,N+1} &= \Frac {- \psi_{i,N-1}+8\psi_{i,N} -6h
g(ih)}{2h^2}\label{BC2},\\
\omega_{0,j} &=
\Frac {\psi_{1,j}-8\psi_{2,j}}{2h^2}i\label{BC3},\\
\omega_{N+1,j} &= \Frac {-
\psi_{N-1,j}+8\psi_{N,j}}{2h^2}\label{BC4}.
\end{array}
\end{eqnarray}
Here $g(x)$ denotes the boundary condition function for the
horizontal velocity
at the boundary $\Gamma_3$.\\
The boundary conditions on $\psi$ are homogeneous Dirichlet BC. The
operators are discretized by second order centered schemes on a
uniform mesh composed by $N$ points in each direction of the domain
of step-size
$h=\Frac{1}{N+1}$. The total number of unknowns is then $2 N^2$.\\
The boundary conditions on $\omega$ are iteratively implemented
according to the relations (\ref{BC1}-\ref{BC4}), making the finite
differences scheme second order accurate. Using the following fourth order accurate extrapolation,

\begin{center}
\begin{equation}\label{NS_extrap4}
\left\lbrace  \begin{array}{rcl}
\omega_{i,0} & = & \dfrac{1}{h^2} \left( 8 \psi_{i,1} - 3 \psi_{i,2} + \dfrac{8}{9} \psi_{i,3} - \dfrac{1}{8} \psi_{i,4} \right),\\
\omega_{i,N+1} & = & \dfrac{1}{h^2} \left( 8 \psi_{i,N} - 3 \psi_{i,N-1} + \dfrac{8}{9} \psi_{i,N-2} - \dfrac{1}{8} \psi_{i,N-3} \right) - \dfrac{25}{6h} g(ih), \\
\omega_{0,j} & = & \dfrac{1}{h^2} \left( 8 \psi_{1,j} - 3 \psi_{2,j} + \dfrac{8}{9} \psi_{3,j} - \dfrac{1}{8} \psi_{4, j} \right),\\
\omega_{N+1, j} & = & \dfrac{1}{h^2} \left( 8 \psi_{N,j} - 3 \psi_{N-1,j} + \dfrac{8}{9} \psi_{N-2,j} - \dfrac{1}{8} \psi_{N-3,j} \right),
\end{array}  \right. 
\end{equation}
\end{center}
we complete the discretization. Now the implementation of the RSS scheme reads as

\begin{itemize}
\item Convection-Diffusion problem: knowing $\psi^{(k)}$, compute $\omega^{(k+1)}$ solution of
\begin{center}
\begin{equation}
\left\lbrace  \begin{array}{rcll}
\dfrac{\omega^{(k+1)}-\omega^{(k)}}{\Delta t}-\dfrac{1}{Re} \Delta \omega^{(k+1)} + \dfrac{\partial \psi^k}{\partial y} \dfrac{\partial \omega^{(k)}}{\partial x}  -  \dfrac{\partial \psi^{(k)}}{\partial x} \dfrac{\partial \omega^{(k)}}{\partial y}& = & 0 & \text{ in } \Omega = ]0, 1[^2 \\
\omega^{(k+1)} & = & \Delta \psi^{(k)} & \text{ on } \partial \Omega \\
\end{array}  \right. 
\label{CD_NS}
\end{equation}
\end{center}
\item  Poisson problem: knowing $\omega^{(k+1)}$, compute $\psi^{(k+1)}$ solution of
\begin{center}
\begin{equation}
\left\lbrace  \begin{array}{rcll}
\omega^{(k+1)} & = & \Delta \psi^{(k+1)} & \text{ in } \Omega = ]0, 1[^2, \\
\psi^{(k+1)} & = & 0 & \text{ on} \partial \Omega .\\
\end{array}  \right. 
\label{Poisson_NS}
\end{equation}
\end{center}
\end{itemize}

\begin{center}
\begin{minipage}[H]{12cm}
  \begin{algorithm}[H]
    \caption{: RRS Navier-Stokes}\label{NS}
    \begin{algorithmic}[1]
        \State  $(\omega^0, \psi^0)$ given as solution of the Stokes problem (\ref{Stokes_psi})
            \For{$k=0,1, \cdots$until convergence}
             \State Update  the boundary terms using  (\ref{NS_extrap4})
              \State Compute  $\omega^{(k+1)}$ by solving. (\ref{CD_NS}) with RSS (\ref{NLRSS})
               \State Compute $\psi^{n+1}$as solution of the  Poisson equation (\ref{Poisson_NS})      
            \EndFor
    \end{algorithmic}
    \end{algorithm}
\end{minipage}
\end{center}

\subsection{RSS Schemes for computing Steady States of the lid driven cavity}

We give now numerical results on the square cavity $\Omega=]0,1[^2$, we compare the numerical values of the steady state with those of the literature. Here $g(x)=1$.

The steady state is computed for $\| \frac{\partial \psi }{\partial t}  \| < \varepsilon = 10^{-3}$. We report hereafter the vorticity and the stream function in figures \ref{NS_Re100}, \ref{NS_Re400}, \ref{NS_Re1000} and \ref{NS_Re3200}, for $Re=100, Re=400, Re=1000, Re=3200$ respectively. They agree with those of the literature \cite{BenArtziCroisille,BruneauJouron,Ghia,Goyon}; the localization of the extrema of $\omega$ and $\psi$ are reported on table \ref{Vortex}.

\begin{figure}[!h]
\begin{center}
\includegraphics[width=5.5cm, height=5.5cm]{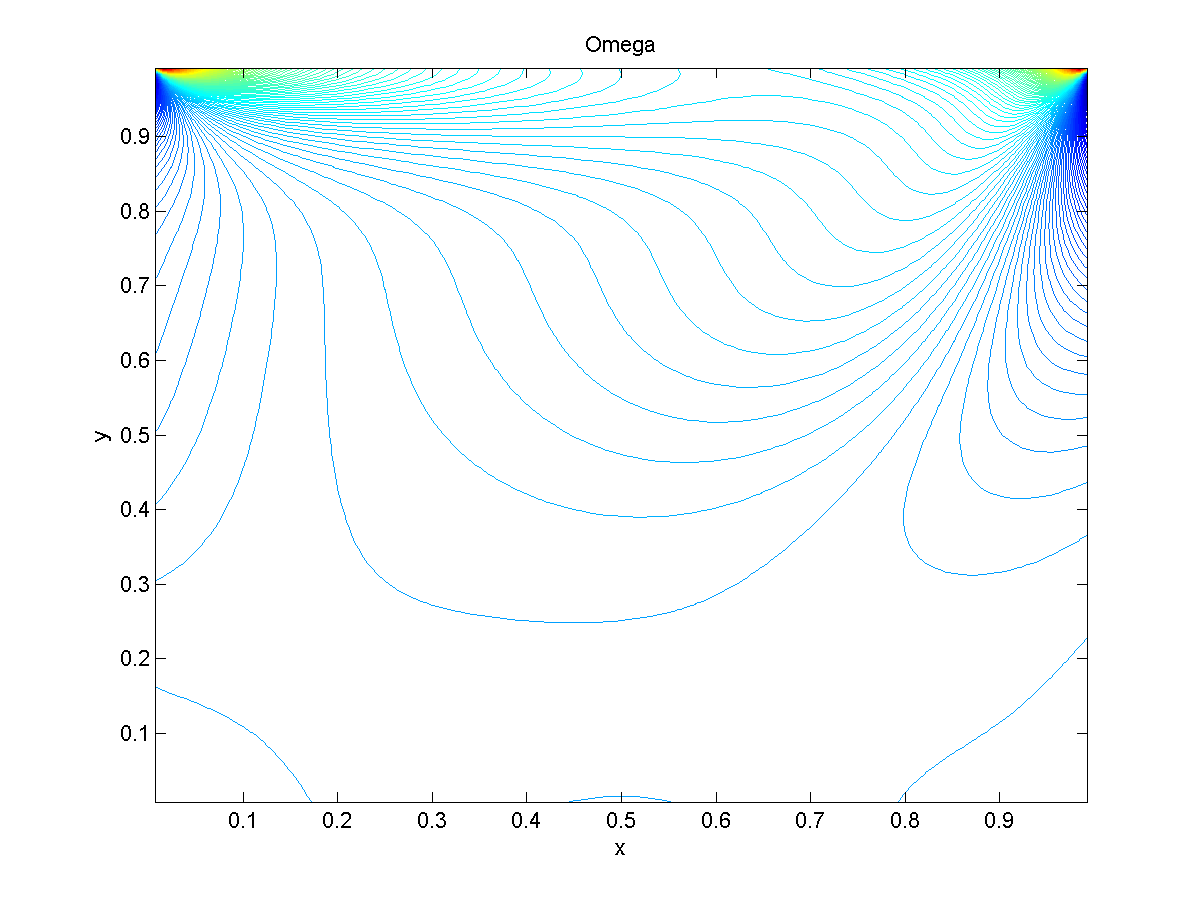}
\includegraphics[width=5.5cm, height=5.5cm]{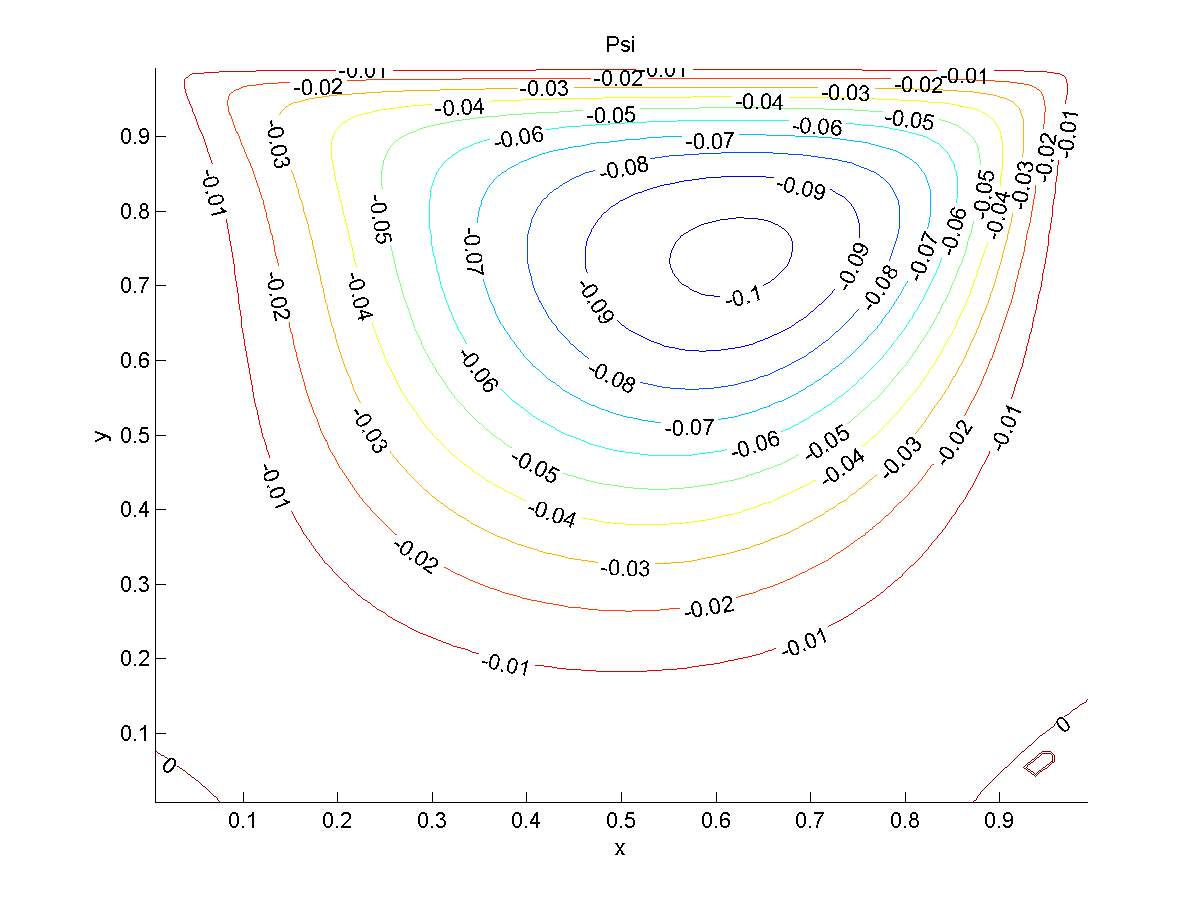}\\
\caption{Steady solution of NSE (\ref{Navier_Stokes_psi}) - $g \equiv 1$ - $\tau = 1$ -  $N=127$ - $Re = 100$ - $\Delta t = 0.001$}
\label{NS_Re100}
\end{center}
\end{figure}

\begin{figure}[!h]
\begin{center}
\includegraphics[width=5.5cm, height=5.5cm]{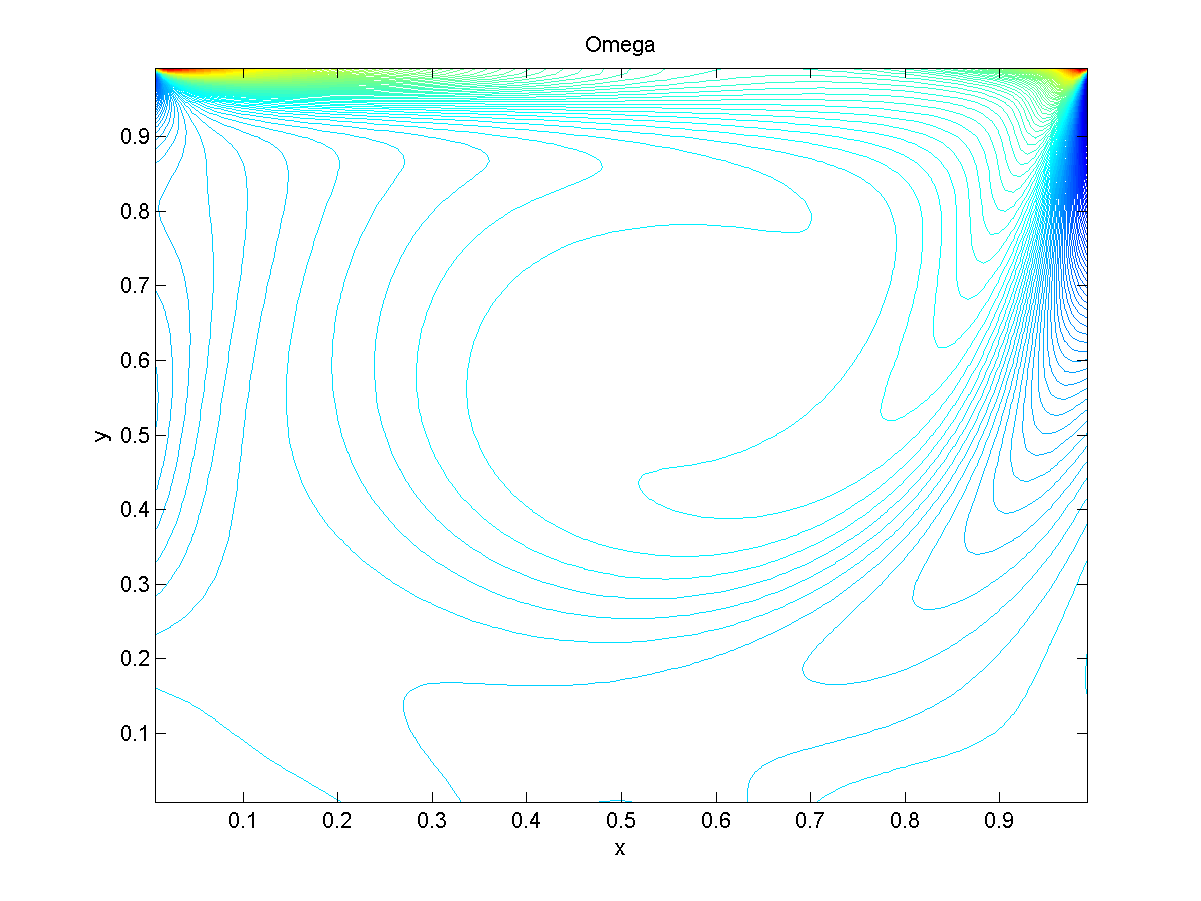}
\includegraphics[width=5.5cm, height=5.5cm]{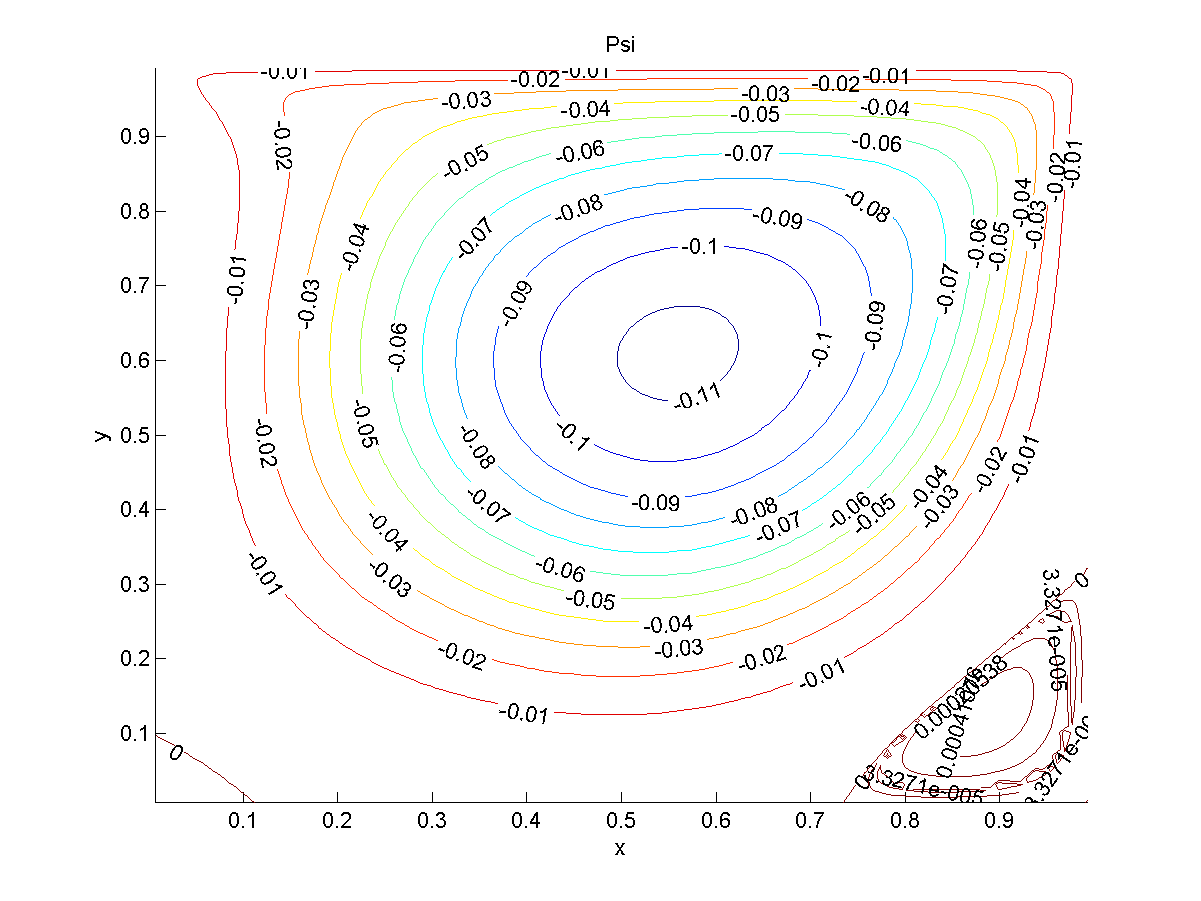}\\
\caption{Solution of NSE  (\ref{Navier_Stokes_psi}) - $g \equiv 1$ - $\tau = 1$ -  $N=127$ - $Re = 400$ - $\Delta t = 0.001$}
\label{NS_Re400}
\end{center}
\end{figure}

\begin{figure}[!h]
\begin{center}
\includegraphics[width=5.5cm, height=5.5cm]{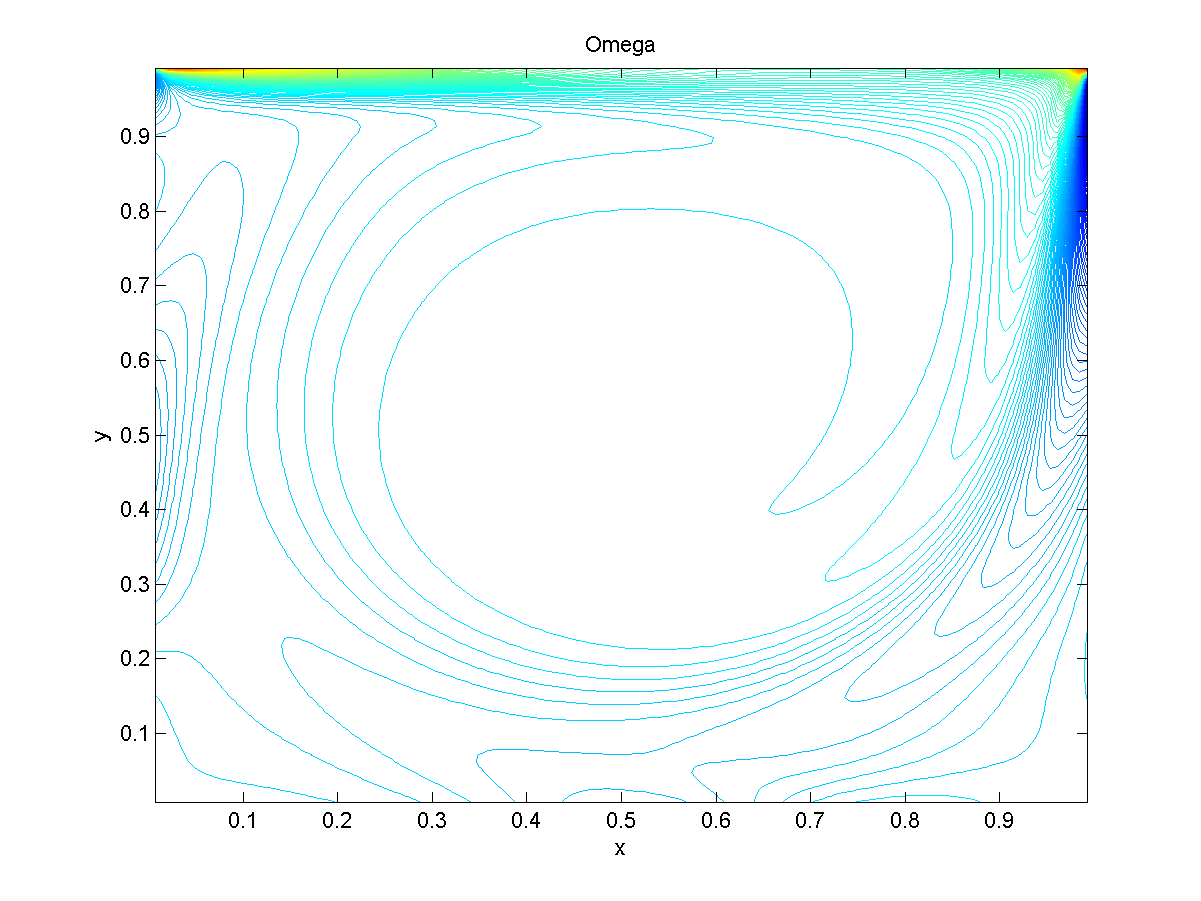}
\includegraphics[width=5.5cm, height=5.5cm]{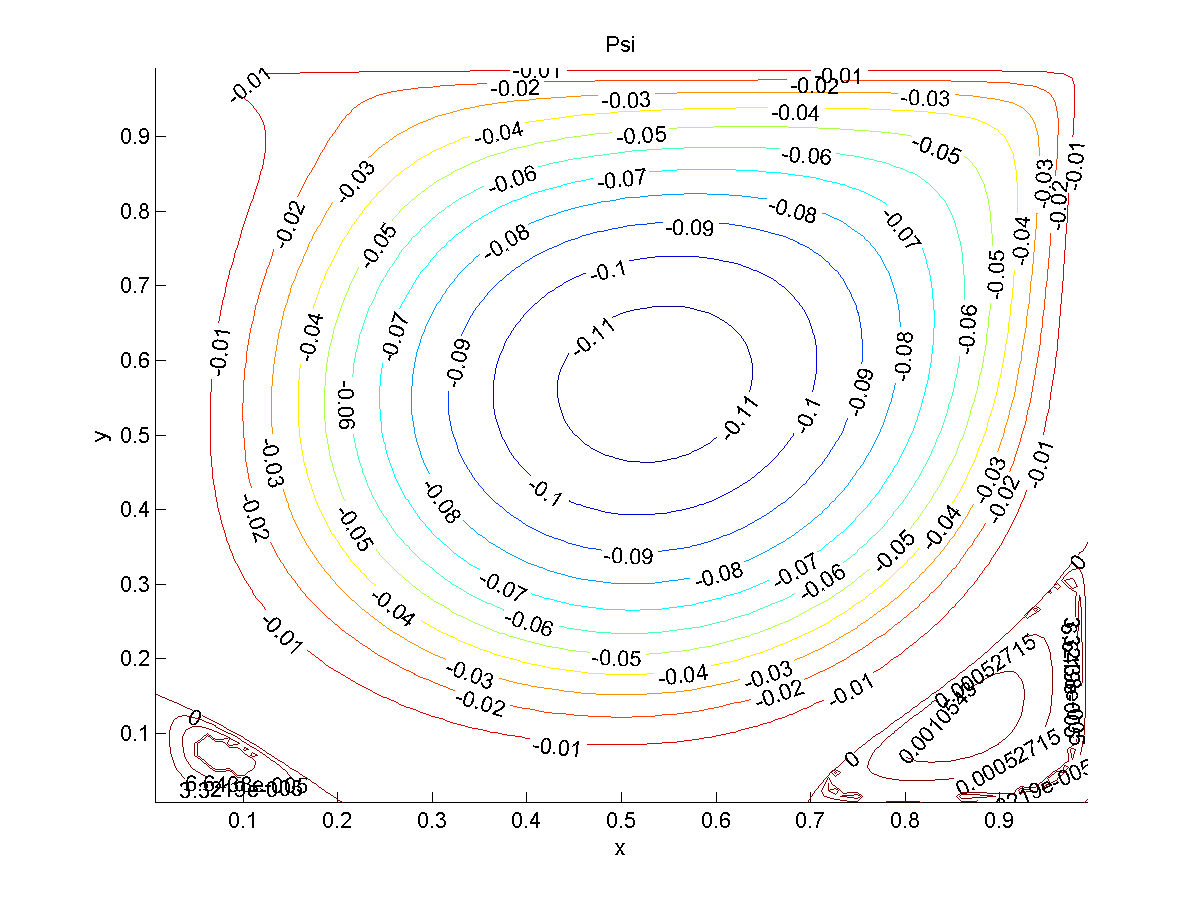}\\
\caption{Solution of NSE (\ref{Navier_Stokes_psi}) - $g \equiv 1$ - $\tau = 1$ -  $N=127$ - $Re = 1000$ - $\Delta t = 0.0005$}
\label{NS_Re1000}
\end{center}
\end{figure}

\begin{figure}[!h]
\begin{center}
\includegraphics[width=5.5cm, height=5.5cm]{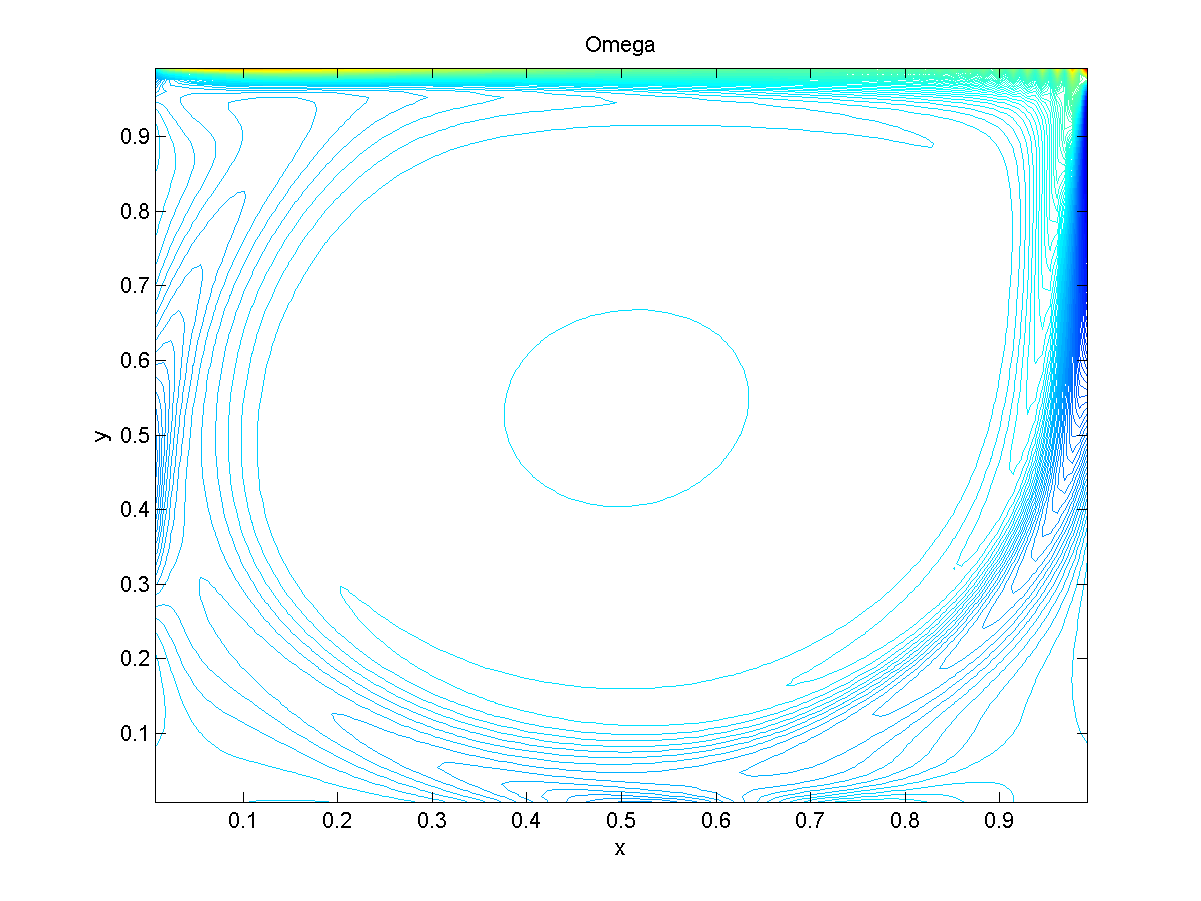}
\includegraphics[width=5.5cm, height=5.5cm]{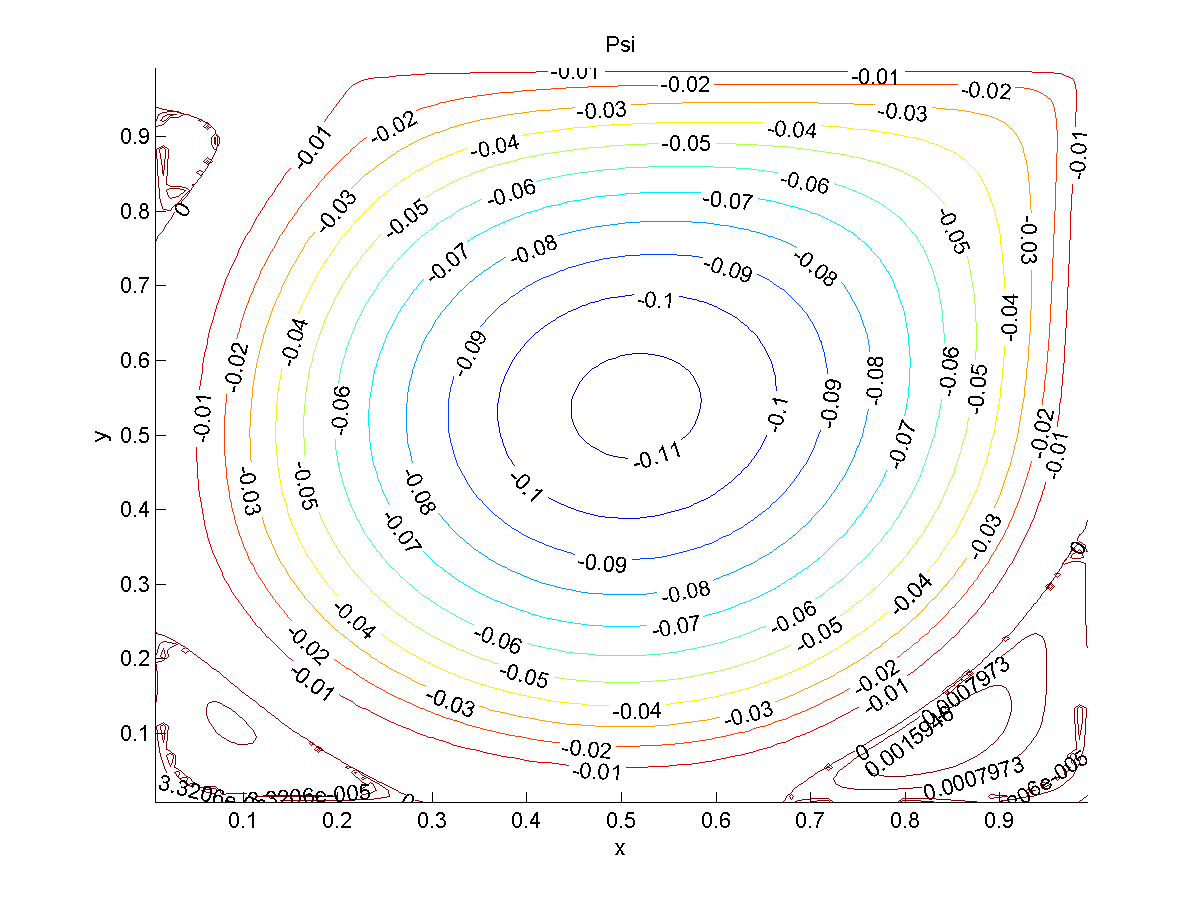}\\
\caption{Solution of NSE (\ref{Navier_Stokes_psi}) - $g \equiv 1$ - $\tau = 1$ -  $N=127$ - $Re = 3200$ - $\Delta t = 0.0005$}
\label{NS_Re3200}
\end{center}
\end{figure}
\begin{table}[!h]
\begin{center}

\vspace{0.5cm}

$Re=100$

\begin{tabular}{| c|c|c|c|c|c | }
\hline 
 & Principal Vortex  & Ben Artzi {\it et al} \cite{BenArtziCroisille} & O. Goyon \cite{Goyon} & extrapoled RSS \\ 
Spatial accuracy &                  & 4th order compact scheme                        & second order      &   4th order          \\
\hline 
grid &   & $97 \times 97$ & $ 129 \times 129$ & $127 \times 127$ \\ 
\hline
$\Delta t$ & & &$0.005$ & $0.004$\\
\hline
&  intensity &     & 0.1033 & 0.1026 \\ 
 
  & x &   & 0.6172 & 0.6172 \\ 

  & y &   & 0.7343 & 0.7422 \\ 
\hline 
\end{tabular}

\vspace{0.5cm}

$Re=400$

\begin{tabular}{| c|c|c|c|c|c | }
\hline 
 & Principal Vortex  & Ben Artzi {\it et al} \cite{BenArtziCroisille} & O. Goyon \cite{Goyon} & extrapoled RSS \\ 
Spatial accuracy &                  & 4th order compact scheme                        & second order      &   4th order          \\
\hline 
grid &   & $97 \times 97$ & $ 129 \times 129$ & $127 \times 127$ \\ 
\hline
 $\Delta t$&  & & &$0.017$\\
 \hline
& intensity & 0.1136 &   & 0.1123 \\ 

  & x & 0.5521 &   & 0.5625 \\ 

  & y & 0.6042 &   & 0.6094 \\ 
\hline 
\end{tabular}

\vspace{0.5cm}

$Re=1000$

\begin{tabular}{| c|c|c|c|c|c | }
\hline 
 & Principal Vortex  & Ben Artzi {\it et al} \cite{BenArtziCroisille} & O. Goyon \cite{Goyon} & extrapoled RSS \\ 
Spatial accuracy &                  & 4th order compact scheme                        & second order      &   4th order          \\
\hline 
grid &   & $97 \times 97$ & $ 129 \times 129$ & $127 \times 127$ \\ 
\hline
$\Delta t$ & & &$0.02$ & $0.01$\\ 
\hline
&  intensity  & 0.1178 & 0.1157 & 0.1158 \\ 

  & x & 0.5312 & 0.5312 & 0.5391 \\ 
 
  & y & 0.5625 & 0.5625 & 0.5703 \\ 
\hline 
\end{tabular}

\vspace{0.5cm}

$Re=3200$

\begin{tabular}{| c|c|c|c|c|c | }
\hline 
 & Principal Vortex  & Ben Artzi {\it et al} \cite{BenArtziCroisille} & O. Goyon \cite{Goyon} & extrapoled RSS \\ 
Spatial accuracy &                  & 4th order compact scheme                        & second order      &   4th order          \\
\hline 
grid &   & $97 \times 97$ & $ 129 \times 129$ & $127 \times 127$ \\ 
\hline
$\Delta t$ & & &$0.01$ & $0.006$\\ 
\hline
&intensity  & 0.1174 & 0.1122 & 0.1129 \\ 

  & x & 0.5208 & 0.5234 & 0.5156 \\ 

  & y & 0.5417 & 0.5468 & 0.5391 \\ 
\hline 
\end{tabular} 
\caption{Extremal values of  $\psi$ and their localization for NSE (\ref{Navier_Stokes_psi}). Comparison with results of Croisille and Goyon, for different Reynolds numbers. $g \equiv 1$ - $\tau = 1$}
\label{Vortex}
\end{center}
\end{table}

Now we illustrate the influence of the stabilization parameter $\tau$ on the convergence in time to the steady state.
A large value of $\tau$ allows to take a large time step $\Delta t$ but slows down the convergence in time.
We have plotted in figures  (\ref{NS_dudt_Re100_tau1_N63}) and (\ref{NS_dudt_Re100_tau100_N63})  the evolution in time of $\| \frac{\partial \psi}{\partial t} \|$. We observe that, for $\tau=1$ the RSS schemes (first and second order) behave similarly as the reference one (semi backward Euler's); for $\tau =100$, we see that RSS is slow downed while its extrapoled version has comparable dynamics to Euler's.

\begin{figure}[!h]
\begin{center}
\includegraphics[width=10cm]{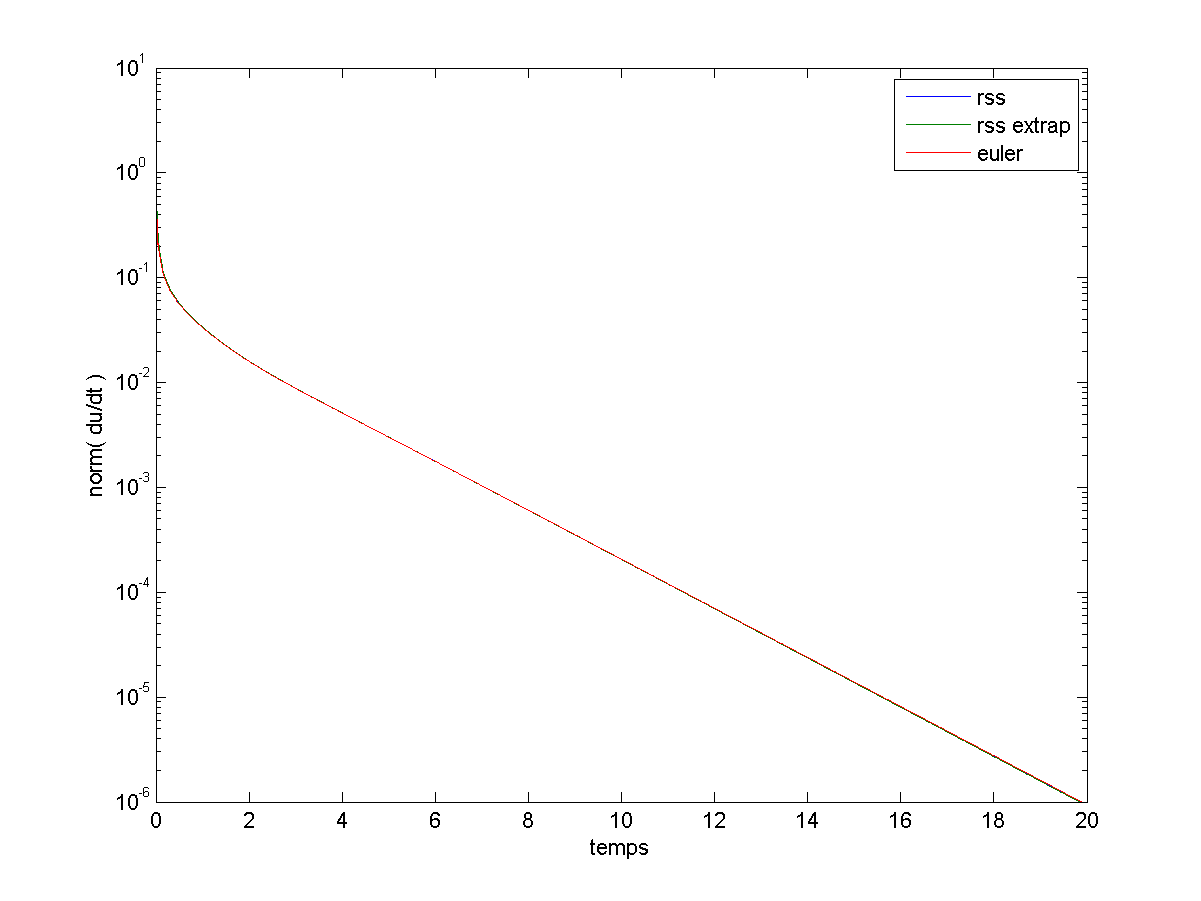}
\caption{Convergence to NSE steady state (\ref{Navier_Stokes_psi}) - $\tau = 1$ -  $N=63$ - $Re = 100$ - $\Delta t = 0.01$}
\label{NS_dudt_Re100_tau1_N63}
\end{center}
\end{figure}

\begin{figure}[!h]
\begin{center}
\includegraphics[width=10cm]{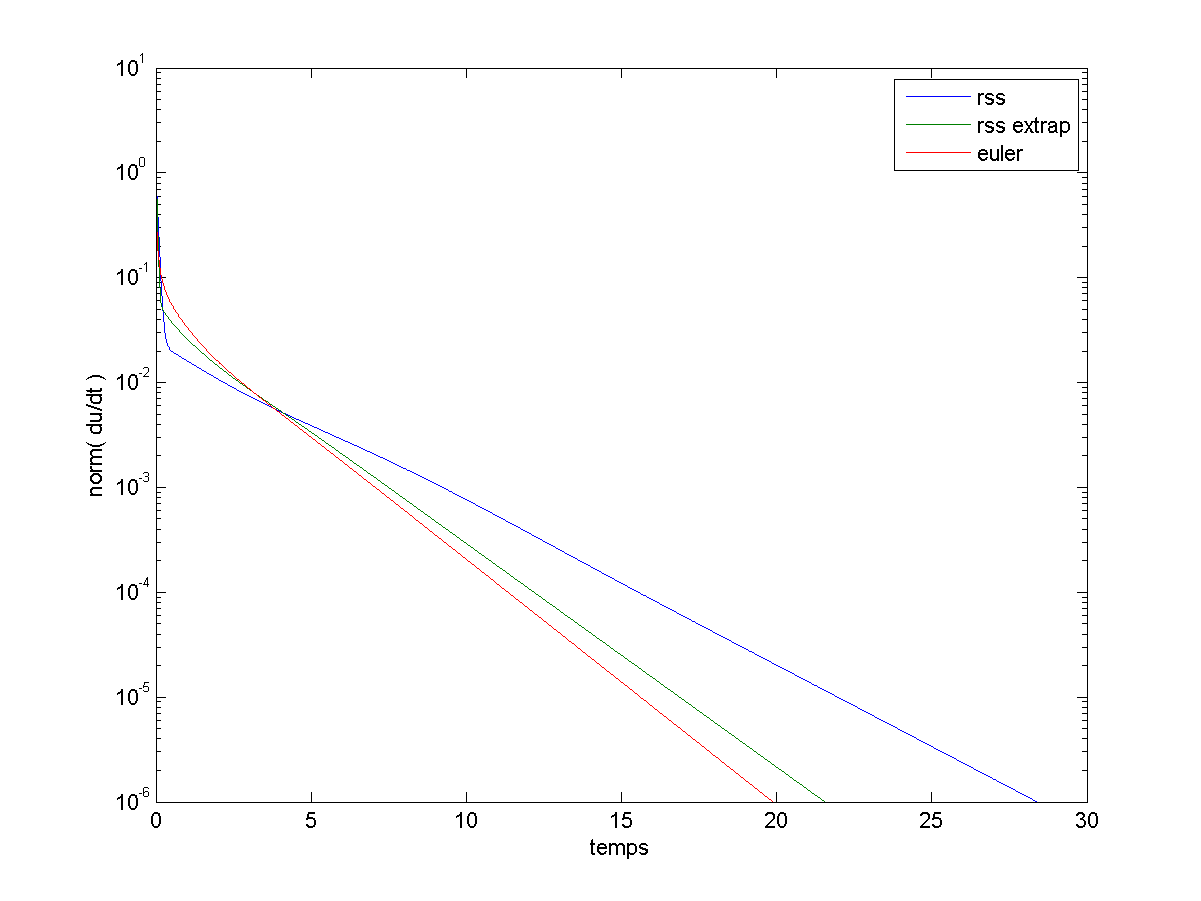}
\caption{Convergence to NSE steady state (\ref{Navier_Stokes_psi}) - $\tau = 100$ -  $N=63$ - $Re = 100$ - $\Delta t = 0.01$}
\label{NS_dudt_Re100_tau100_N63}
\end{center}
\end{figure}

We now give numerical results for the rectangular cavity. They are presented in figures  (\ref{NSrect_Re100}) ,(\ref{NSrect_Re400}), (\ref{NSrect_Re1000}) and (\ref{NSrectRe3200}) for $Re=100, \ Re=400$, $Re=1000$, $Re=3200$ respectiveley. They agree with those obtained by Goyon \cite{Goyon}, see also the numerical values reported in Table \ref{Vortex_rect}.

\begin{figure}[!h]
\begin{center}
\includegraphics[width=4.5cm, height=8cm]{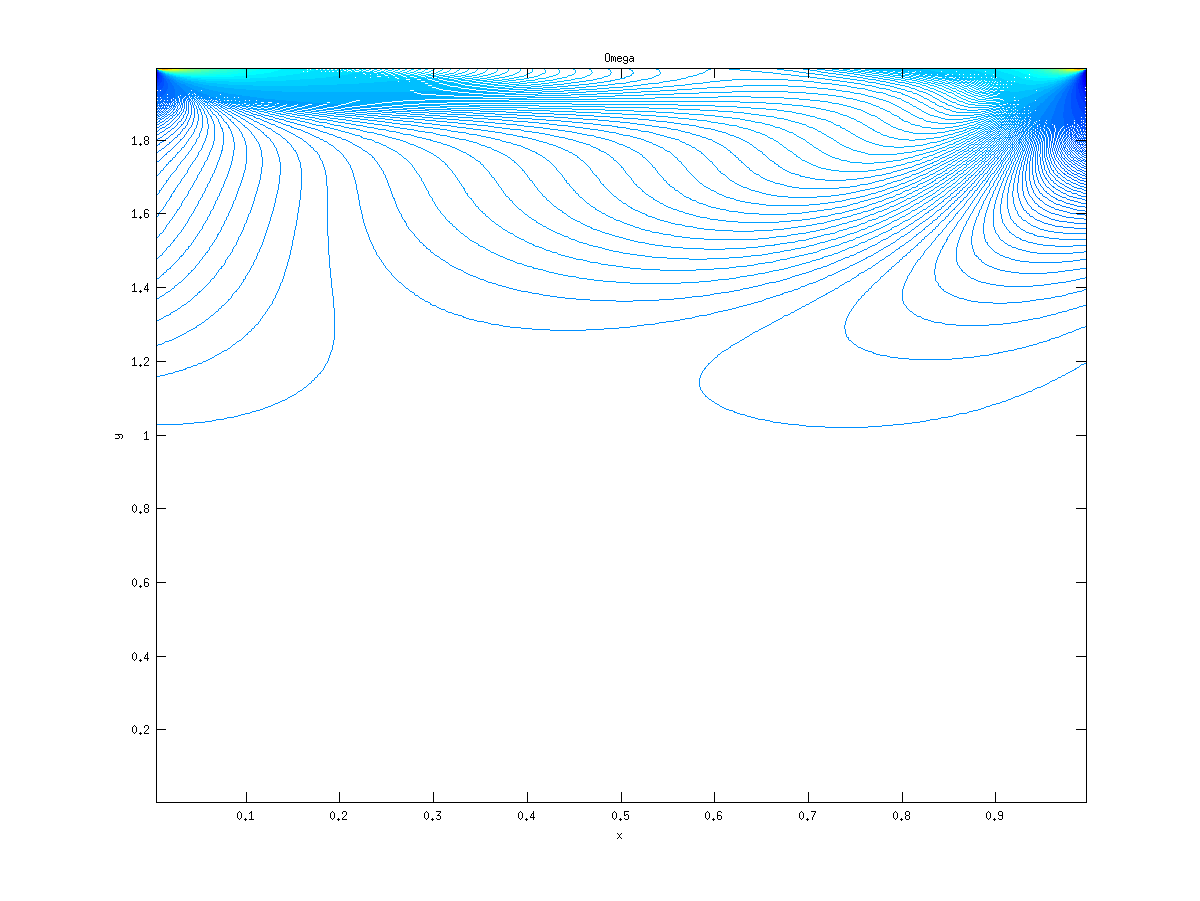}
\includegraphics[width=4.5cm, height=8cm]{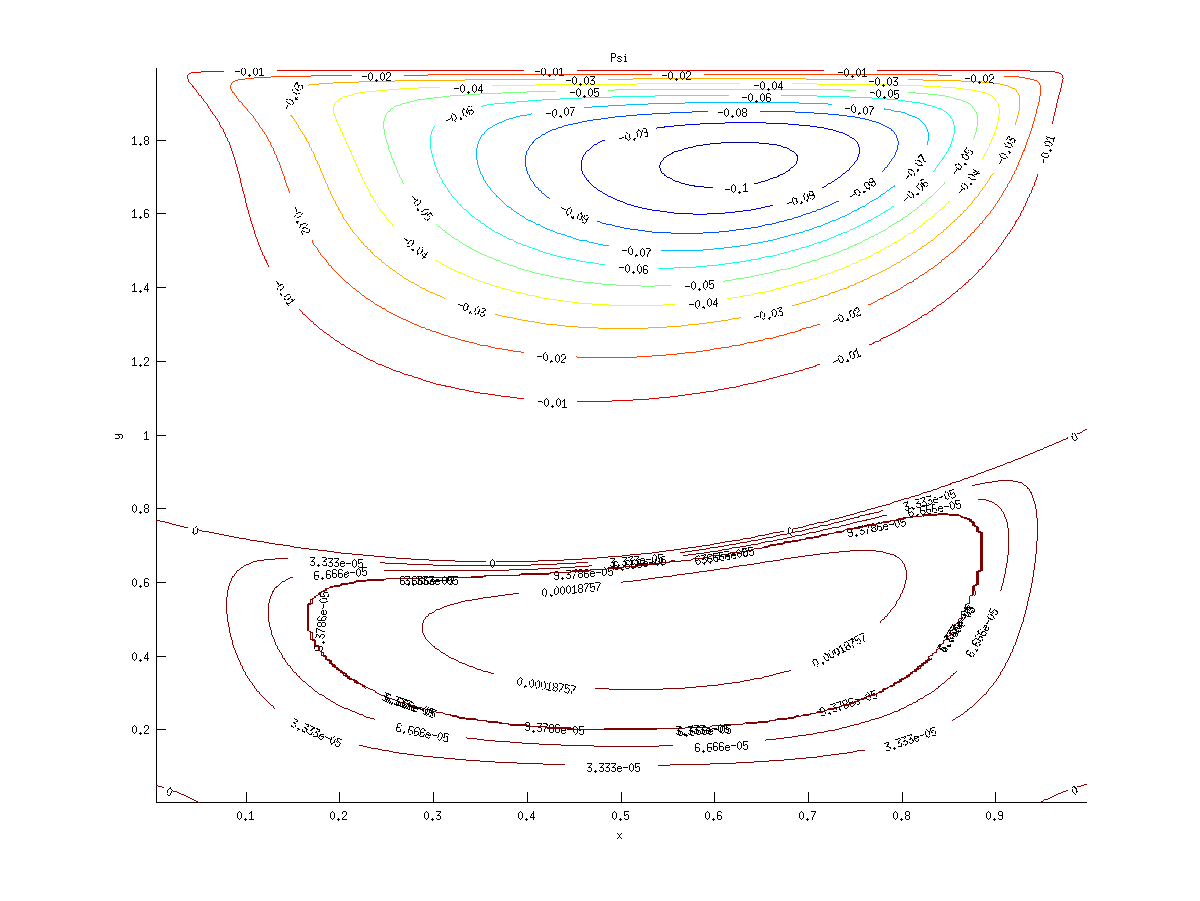}\\
\caption{Solution of (\ref{Navier_Stokes_psi}) in $[0; 1] \times [0; 2]$ - $g \equiv 1$ - $\tau = 1$ -  $255 \times 511$ - $Re = 100$ - $\Delta t = 0.001$}
\label{NSrect_Re100}
\end{center}
\end{figure}

\begin{figure}[!h]
\begin{center}
\includegraphics[width=4.5cm, height=8cm]{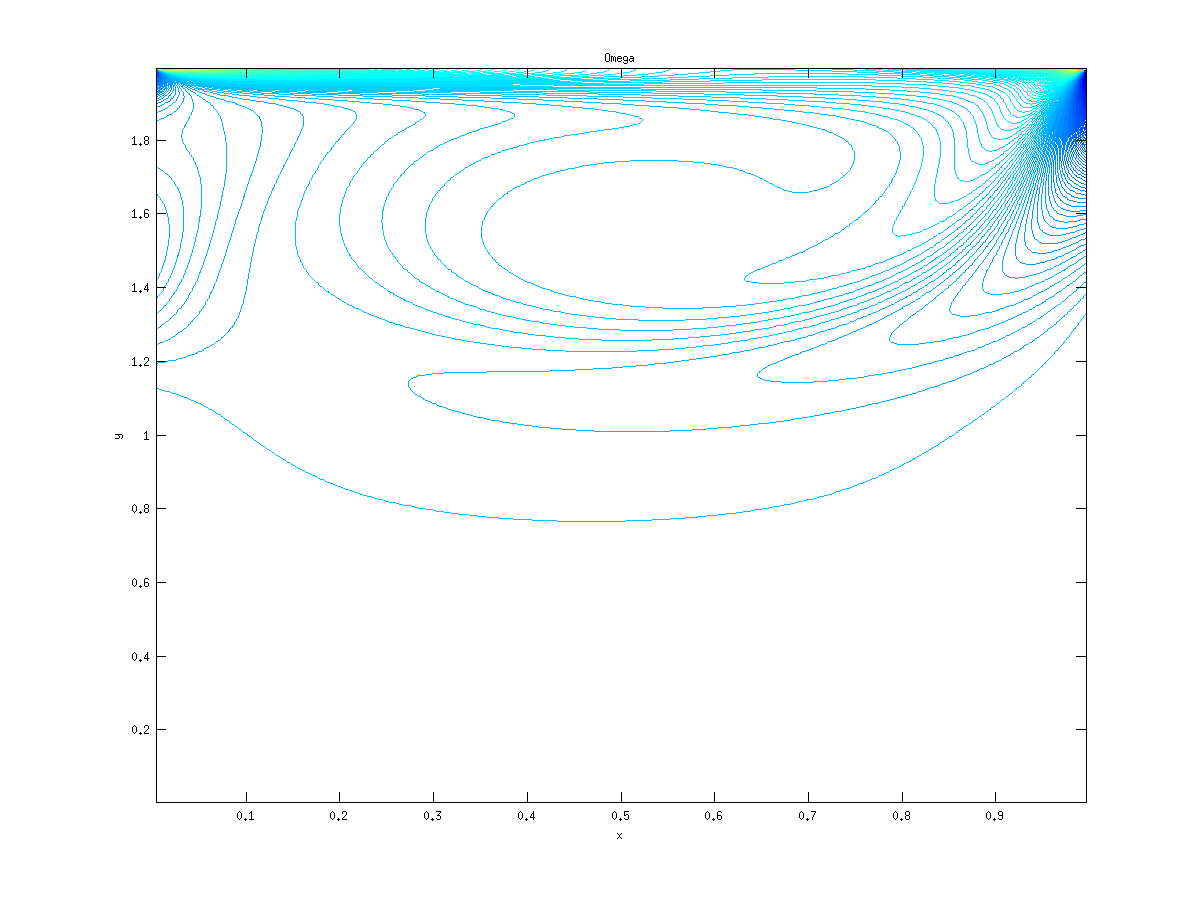}
\includegraphics[width=4.5cm, height=8cm]{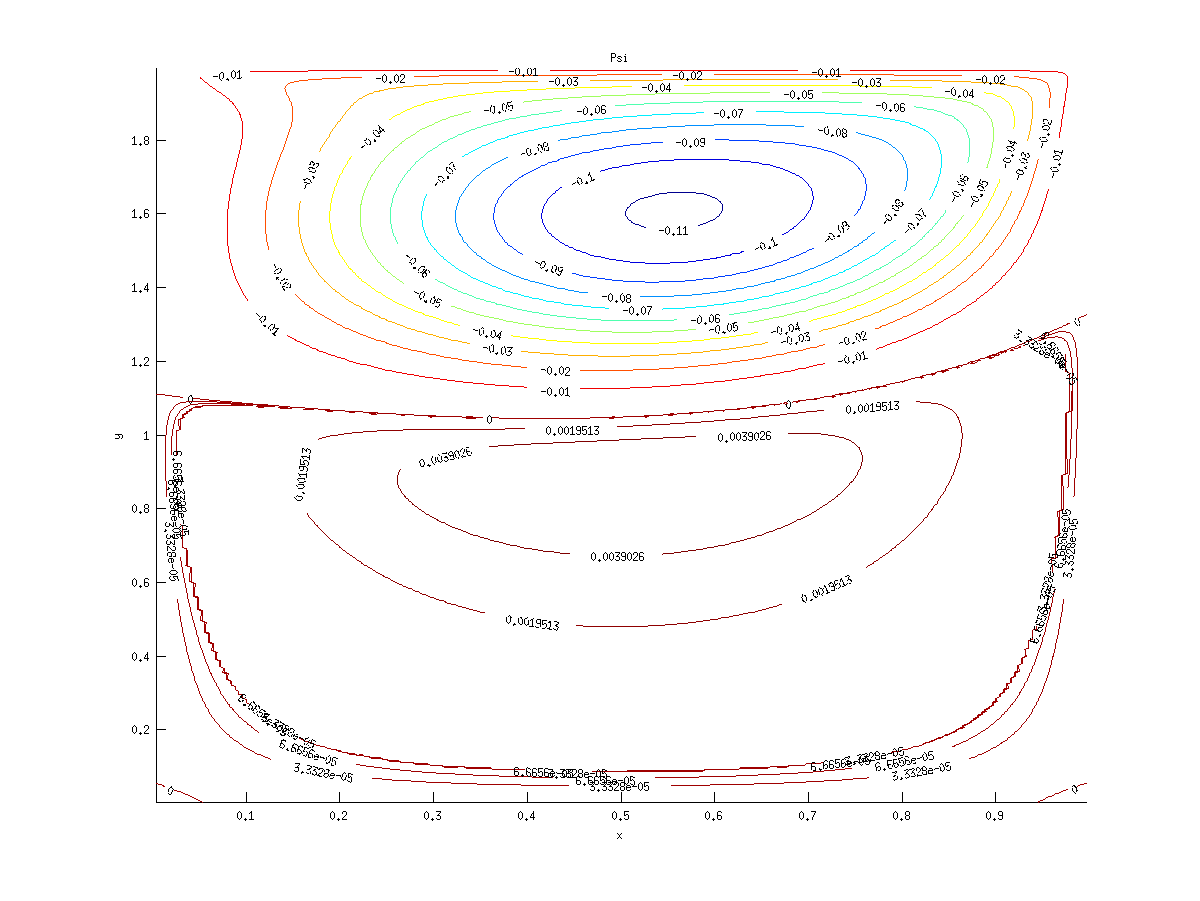}\\
\caption{Solution of NSE (\ref{Navier_Stokes_psi}) in $[0; 1] \times [0; 2]$ - $g \equiv 1$ - $\tau = 1$ -  $255 \times 511$ - $Re = 400$ - $\Delta t = 0.001$}
\label{NSrect_Re400}
\end{center}
\end{figure}

\begin{figure}[!h]
\begin{center}
\includegraphics[width=4.5cm, height=8cm]{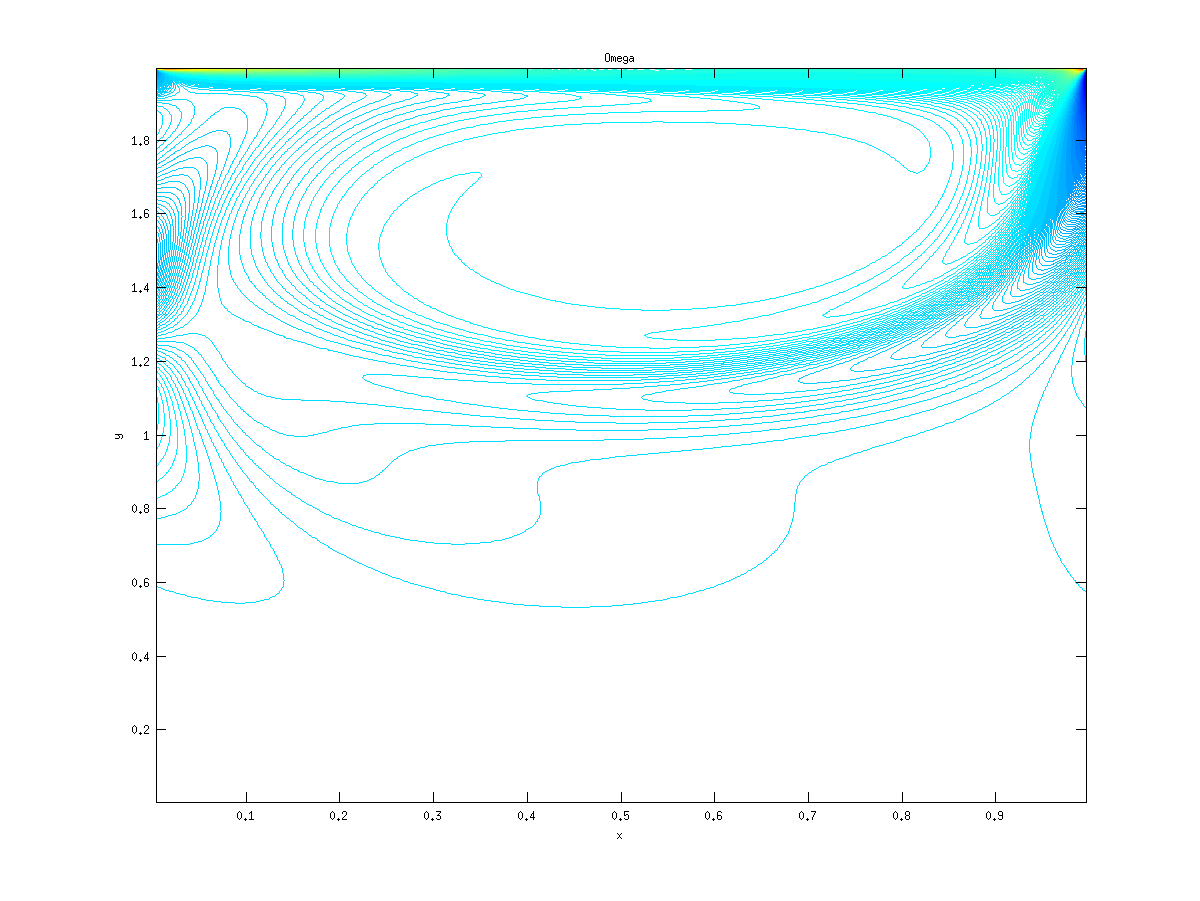}
\includegraphics[width=4.5cm, height=8cm]{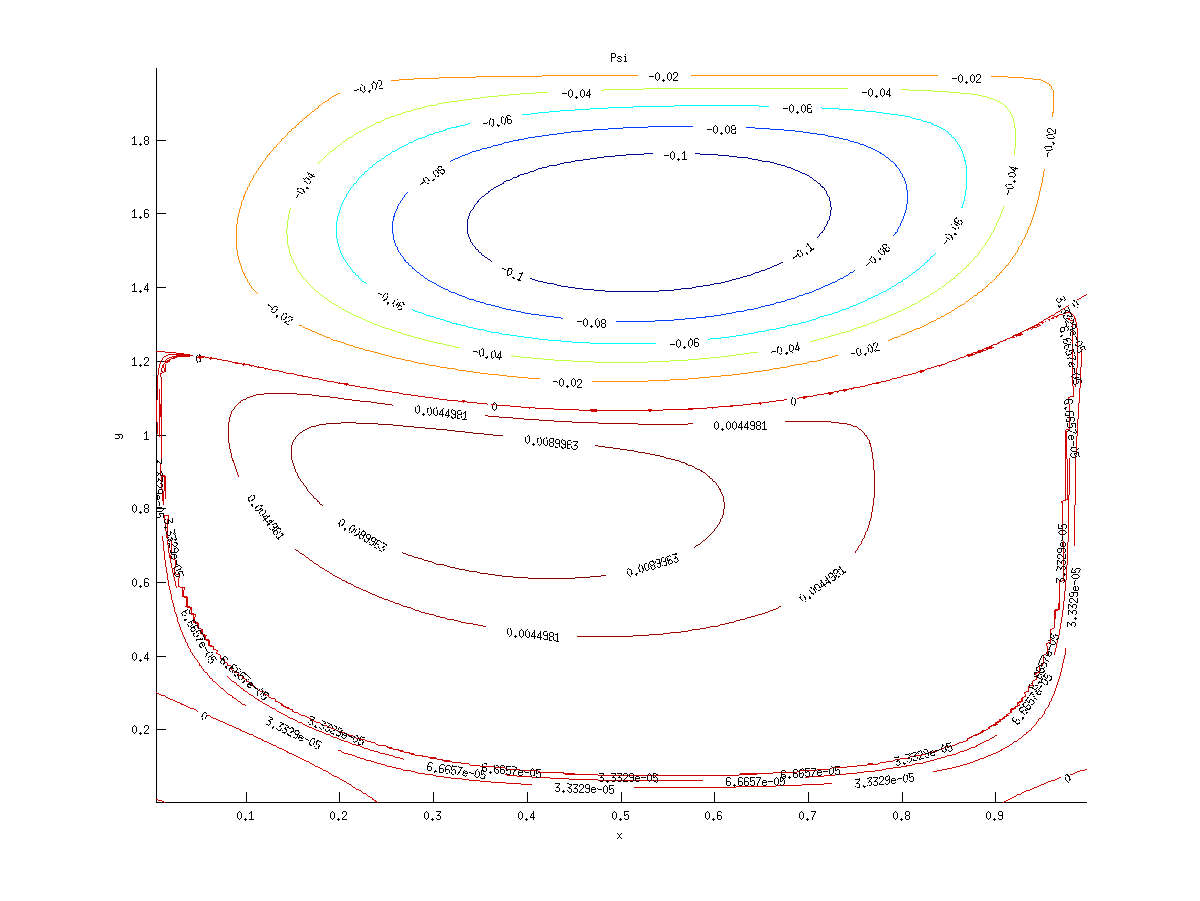}\\
\caption{Solution of NSE (\ref{Navier_Stokes_psi}) in $[0; 1] \times [0; 2]$ - $g \equiv 1$ - $\tau = 1$ -  $255 \times 511$ - $Re = 1000$ - $\Delta t = 0.0005$}
\label{NSrect_Re1000}
\end{center}
\end{figure}

\begin{table}[!h]
\begin{center}

$Re=100$

\begin{tabular}{| c|c|c|c|c|c | }
\hline 
                &             & extrapoled RSS     &  extrapoled RSS    & O. Goyon \cite{Goyon} & C-H. Bruneau and\\ 

                &             &                  &                  & second order               & C. Jouron \cite{BruneauJouron}  \\ 
\hline 
grid:      &             & $63 \times 127$  & $255 \times 511$ & $65 \times 129$       & multi-grid \\ 
\hline 
$\Delta t$ &        &       $1.10^{-3}$                &            $1.10^{-3}$            &        $1.10^{-2}$                           & \\
\hline 
$\epsilon$:         &             & $10^{-5}$        & $10^{-3}$        &         $10^{-5}$              &\\ 
\hline 
VS              & $\psi$      & 0.1040           & 0.1034           & 0.1035                & 0.1033\\ 

                & x           & 0.6094           & 0.6172           & 0.6093                & 0.6172\\ 

                & y           & 1.7344           & 1.7344           & 1.7343                & 1.7344\\ 
\hline 
VI              & $\psi$      & $8 \times 10^{-3}$& 0.0003           & $6.65 \times 10^{-4}$& $0.783 \times 10^{-3}$\\ 

                & x           & 0.5469          & 0.5820           & 0.5468                 & 0.5391\\ 
                
                & y           & 0.5938           & 0.5039           & 0.5781                & 0.5859\\ 
\hline
\end{tabular} 

\vspace{0.5cm}

$Re=400$

\begin{tabular}{ |  c|c|c|c|c|c | }
\hline 
                &             &  extrapoled RSS    &  extrapoled RSS    & O. Goyon \cite{Goyon} & C-H. Bruneau and\\ 

                &             &                  &                  & second order              & C. Jouron \cite{BruneauJouron}\\ 
\hline 
grid:      &             & $63 \times 127$  & $255 \times 511$ & $65 \times 129$       & multi-grid \\ 
\hline 
$\Delta t$ &        &       $1.10^{-3}$                &            $1.10^{-3}$            &        $1.10^{-2}$                           & \\
\hline 
$\epsilon$:         &             & $10^{-5}$        & $10^{-3}$        &        $10^{-5}$               & \\ 
\hline 
VS              & $\psi$      & 0.1131           & 0.1120           & 0.1097                & 0.1124\\ 

                & x           & 0.5469           & 0.5586           & 0.5625                & 0.5547\\ 

                & y           & 1.6094           & 1.6094           & 1.6094                & 1.5938\\ 
\hline 
VI              & $\psi$      & 0.009            & 0.0059           & $8.06 \times 10^{-3}$ & $0.909 \times 10^{-2}$\\ 

                & x           & 0.4219           & 0.5156           & 0.4375                & 0.4297\\ 
                
                & y           & 0.8438           & 0.8750           & 0.8593                & 0.8125\\ 
\hline
\end{tabular} 

\vspace{0.5cm}

$Re=1000$

\begin{tabular}{|  c|c|c|c|c|c | }
\hline 
                &             &  extrapoled RSS    &  extrapoled RSS    & O. Goyon \cite{Goyon} & C-H. Bruneau and \\  

                &             &                  &                  & second order               & C. Jouron \cite{BruneauJouron}\\ 
\hline 
grid:      &             & $127 \times 257$ & $255 \times 511$ & $257 \times 513$      & multi-grid \\ 
\hline
$\Delta t$ &        &       $5.10^{-4}$                &            $5.10^{-4}$            &        $5.10^{-3}$                           & \\
\hline 
$\epsilon$:         &             & $10^{-5}$        & $10^{-5}$        &      $10^{-5}$                 & \\ 
\hline 
VS              & $\psi$      & 0.1189           &  0.1196          & 0.1187                & 0.1169\\ 

                & x           & 0.5312           &  0.5312          & 0.5313                & 0.5273\\ 

                & y           & 1.5781           &  1.5781          & 1.5781                & 1.5625\\ 
\hline 
VI              & $\psi$      & 0.0134           &  0.0135          & $1.32 \times 10^{-2}$ & 0.0148\\ 

                & x           & 0.3438           &  0.3398          & 0.3359                & 0.3516\\ 
                
                & y           & 0.8438           &  0.8438          & 0.8476                & 0.7891\\ 
\hline
\end{tabular}

\caption{Some extremal values of $\omega$ and $\psi$ for the steady state of NSE (\ref{Navier_Stokes_psi}) on $[0; 1] \times [0; 2]$ (upper and lower vortex) - $g \equiv 1$ - $\tau = 1$}
\label{Vortex_rect}
\end{center}
\end{table}
\clearpage

\begin{figure}[!h]
\begin{center}
\includegraphics[width=4.5cm, height=8cm]{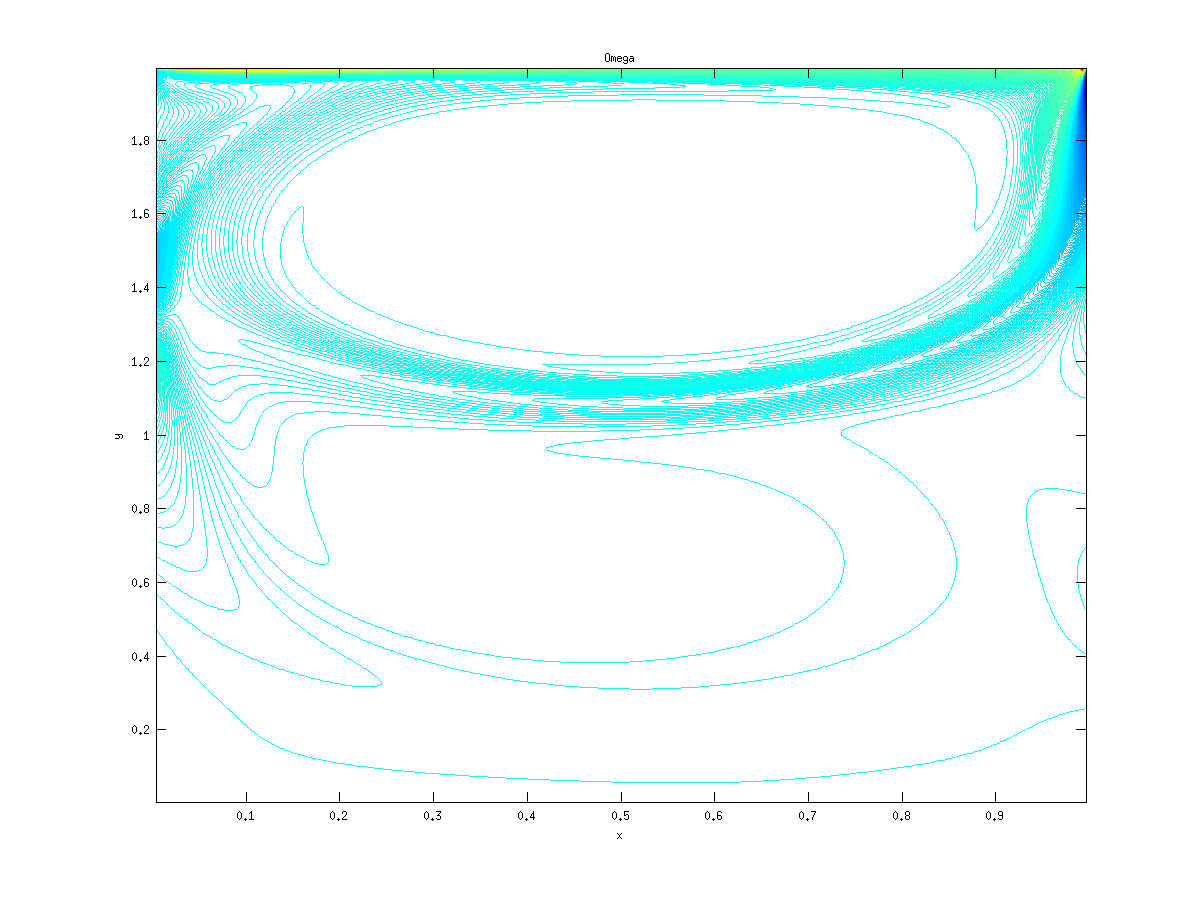}
\includegraphics[width=4.5cm, height=8cm]{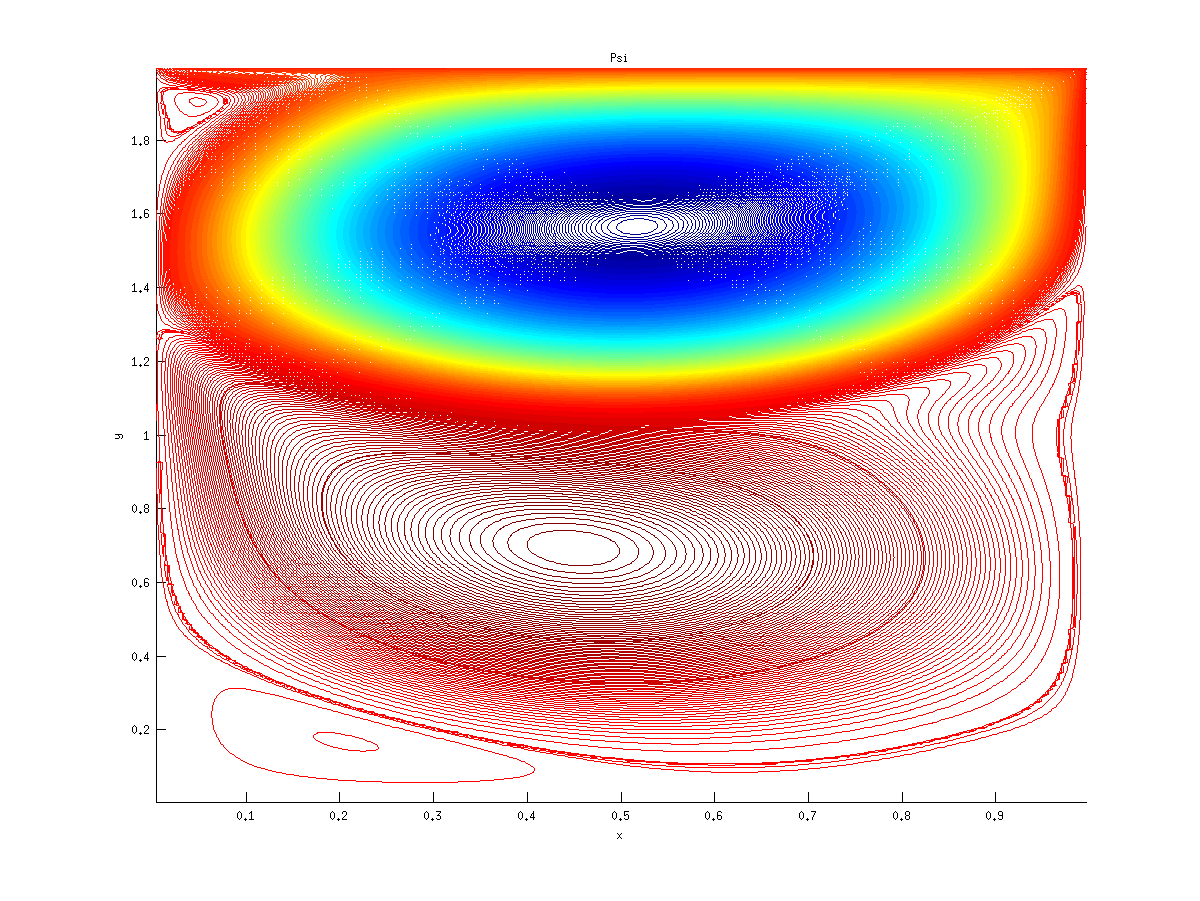}\\
\caption{Solution of NSE (\ref{Navier_Stokes_psi}) on $[0; 1] \times [0; 2]$ - $g \equiv 1$ - $\tau = 1$ -  $255 \times 511$ - $Re = 3200$ - $\Delta t = 0.0005$}
\label{NSrect_Re3200}
\end{center}
\end{figure}
Here the vorticity $\omega$ and the streamfunction $\psi$ at steady state for $Re=3200$. The maximal values found 
for $\psi$ are, for the maximum $0.0196$ located in $(x,y)=(0.4492,06914)$ and the minimum is $-1.215$, located in
$(x,y)=(0.5156,1.15664)$.
\subsection{On the role of $\tau$ and of the extrapolation to the convergence in time to the steady state}
We now denote by $T_c$ the time at which the convergence to the steady state is reached, say $T_c=(k+1)\Delta t$ with
$\displaystyle{ \parallel \Frac{\psi^{(k+1)}-\psi^{(k)} }{\Delta t}\parallel } \le \epsilon$. We put the symbol $***$ when the scheme is unstable and blows up numerically; 'NC' when it is not convergent after A given large time (T=2000) and finally 'Inc. Stab' when it is inconditionally stable.\\

We first consider the RRS scheme with the second order laplacian matrix  as a preconditioner. We show for low Reynolds numbers, that a large value of $\tau$ allows to use large $\Delta t$ and that the slower convergence to the steady state can be corrected by using the Richardson extrapolation.
\begin{table}[h!]
\begin{center}
\begin{tabular}{|c||c|c|c||c|c|c|}
\hline
$\tau$ & $\Delta t$ & $\Delta t_{max}$ & $T_c$ & $\Delta t$ & $\Delta t_{max}$ & $T_c$\\
Extrapolation & no  & no  & no  & yes& yes& yes\\
\hline 
\hline 
$\tau = 1$ & 0.01 & 0.019 & 15.62 & 0.01 & 0.019 & 15.6 \\ 
\hline
              & 0.019 &    & 15.665 & 0.019& & 16.492 \\
\hline 
\hline
$\tau = 10$&  0.02 & 0.14 & 16.86 & 0.02 & 0.11 & 15.8 \\ 
\hline 
  & 0.06 &   & 19.32 & 0.06 &   & 16.8 \\ 
\hline 
  & 0.07 &   & 19.95 & 0.07 &   & 17.29 \\ 
\hline 
  & 0.1 &   & 21.7  & 0.1 &   & 21.7 \\  
\hline 
\hline
$\tau = 30$    & 0.3 & Inc. Stab. & 54.5 & 0.3 &  1.08 & 32.1 \\ 
\hline 
  & 0.4 &   & 79.6 & 0.4 &   & 38 \\ 
\hline 
\hline
\end{tabular}
\caption{RSS (left and RSS with  Extrapolation (right) $Re=100$, $n=63$, $\epsilon=10^{-5}$}
\label{tab1}
\end{center}
\end{table}
\begin{table}[h!]
\begin{center}
\begin{tabular}{|c||c|c|c||c|c|c|}
\hline
$\tau$ & $\Delta t$ & $\Delta t_{max}$ & $T_c$ & $\Delta t$ & $\Delta t_{max}$ & $T_c$\\
Extrapolation & no  & no  & no  & yes& yes& yes\\
\hline 
\hline 
$\tau = 1$ & 0.01 & 0.014 & 35.02& 0.01 & 0.014 & 35.03 \\ 
\hline 
  & 0.014 &   & 35.014  & 0.014 &   & 35.042 \\ 
\hline 
\hline
$\tau = 20$   & 0.2 & 0.28& 67.60 & 0.21 & 0.22 & 57.6 \\ 
\hline 
\hline
$\tau = 30$    & 0.3 &   & 105 & 0.3 &0.35   & 58.2 \\ 
\hline 
                       & 0.35 &   & 117.95 & 0.35 &   & 92.4\\ 
\hline 
\hline 
  $\tau = 50$      & 0.3 & 5.1  & 139.2& 0.3 &  1 & 69 \\ 
\hline         
                          & 0.4 &   & 176.8& 0.4 &   & 83.6 \\ 
\hline         
                          & 0.5 &   & 212.5& 0.5 &   & 99 \\     
\hline         
                          & 0.6 &   & 246.6& 0.6 &   & 113.4 \\                         
\hline 
\hline
\end{tabular}
\caption{RSS (left) and RSS witht   Extrapolation (right) $Re=400$, $n=127$, $\epsilon=10^{-5}$}
\label{tab2}
\end{center}
\end{table}

Of course, since the stabilization allows to take larger time steps, a important gain in CPU time can be obtained when computing a steady state. It can be estimated by considering the number of iteration in time $NT$: $\Frac{T_c}{\Delta t}$ for RSS and  $3\Frac{T_c}{\Delta t}$ for RSS with extrapolation.
For example, taking $Re=400$ and $n=127$, we find $NT=2501$ for $\tau=1$ and $\Delta t=0.014$ and  for $\tau=30$ and $\Delta t=0.6$, we have $NT=410$ (RSS) and $NT=568$ (RSS with extrapolation) ; hence a factor $6.1$ is reached for RSS and one of $4.4$ for extrapolated RSS, see Table \ref{tab2}.
\newpage
We now consider larger Reynolds numbers and take into account the convective part of the equation in the construction of the sparse RSS preconditioner and apply nonlinear RSS scheme (\ref{NLRSS1}) to the vorticity time marching, say
\begin{eqnarray}
\Frac{\omega^{(k+1)}-\omega^{(k)}}{\Delta t} 
+\left(\Frac{1}{Re}A_2+diag(Dy\psi^{(k)}) Dx -diag(Dx\psi^{(k)})Dy\right)(\omega^{(k+1)}-\omega^{(k)})=-F(\psi^{(k)},\omega^{(k)})
\end{eqnarray}
where $A_2$ is the second order laplacian matrix, $diag(Dy\psi^{(k)})$ (resp. $diag(Dx\psi^{(k)})$) is the
diagonal matrix with the discrete (second order accurate) approximation of $\Frac{\partial \psi^{(k)}}{\partial x}$
(resp.  $\Frac{\partial \psi^{(k)}}{\partial y}$) at grid points as entries; $Dx$ (resp. $Dy$) denote the (second order accurate) first derivative matrix in $x$ (resp. in $y$) on the cartesian grid. Finally $-F(\psi^{(k)},\omega^{(k)})$ is the high order compact scheme discretisation of $-\Frac {1}{Re}\Delta \omega
 +\Frac {\partial \phi}{\partial y}
\Frac {\partial \omega}{\partial x} - \Frac {\partial \phi}{\partial
x}
\Frac {\partial \omega}{\partial y}$.
\begin{table}[h!]
\begin{center}
\begin{tabular}{|c||c|c|c||c|c|c||c|c|c|}
\hline
Method & RSS & RSS & RSS & RSS & RSS & RSS & NLRSS & NLRSS & NLRSS\\
$\tau$ & $\Delta t$ & $\Delta t_{max}$ & $T_c$ & $\Delta t$ & $\Delta t_{max}$ & $T_c$&$\Delta t$ & $\Delta t_{max}$ & $T_c$\\
Extrapolation & no  & no  & no  & yes& yes& yes &yes& yes& yes\\
\hline 
\hline 
$\tau = 1$ & 0.005 & 0.005 & 56.21& 0.005 & 0.01 & 56.81 & & &  \\ 
\hline
                  &  0.01       &        &   ***  &     0.01      &  56.79       &   & 0.01 & 0.02 & 56.86 \\ 
                  &  0.02       &        &   ***  &     0.02      & ***       &    & 0.02 &   & 56.96 \\ 
\hline 
\hline  
$\tau = 30$     &0.05 &0.04 & NC  &       0.05&0.08&47.95&  0.05 & 0.7 & 65.05 \\ 
\hline
                     & 0.1   &      &***&  0.1&&***&0.1 &   & 62.5 \\ 
\hline
                     & 0.7   &      &*** &0.7   &&*** &  0.7 &   & 321.3 \\ 
\hline 
\end{tabular}
\caption{RSS (left)   RSS with   Extrapolation (center) and extrapolated NLRSS  (right) $Re=1000$, $n=127$, $\epsilon=10^{-5}$}
\label{tab6}
\end{center}
\end{table}
\begin{table}[h!]
\begin{center}
\begin{tabular}{|c||c|c|c||c|c|c|}
\hline
$\tau$ & $\Delta t$ & $\Delta t_{max}$ & $T_c$ & $\Delta t$ & $\Delta t_{max}$ & $T_c$\\
Extrapolation & no  & no  & no  & no& no& no\\
\hline
$\tau = 10$    & 0.1 &  0.6 & 223.9 &  0.1& 0.006 & ***\\ 
\hline
\end{tabular}
\caption{NLRSS with (left)  and RSS  (right) $Re=3200$, $n=127$, $\epsilon=10^{-5}$}
\label{tab7}
\end{center}
\end{table}

As Shown on Table \ref{tab6} and Table \ref{tab7}, NLRSS outperform RSS for the computation of  Steady States for higher Reynolds numbers, for $Re=1000$ and {\it a fortiori} for $Re=3200$. It allows to use large times steps while RSS is  unstable for such $\Delta t$.
\section{Concluding remarks}
We have studied RRS-like scheme (and their implementations) and pointed out their advantages for the numerical solution of parabolic problems when using high order compact schemes in finite differences for the space disctretization. In particular, the possibility of using fast solvers attached to a standard second order discretization, speeds up the resolution while bringing an enhanced stability.
We also pointed out the role of the
approximation of $\tau B$ to $A$ in the dynamics of the convergence to a steady state: a too strong 
stablization slows down the convergence in time while enhacing the stability of the scheme, the application of Richardson extrapolation allows to recover a dynamics close to the one of the classical schemes.
The robustness of the schemes is illustrated with the solution of 2D NSE equations.
The RSS approach is very versatile and allows adaptations of a large number of techniques of numerical analysis of ODEs. Many developments remain to consider, such as applying factorization updatings on the preconditioners or deriving and applying multilevel general (or Block) RSS schemes for the  solution of other large scale parabolic problems. 

E-mail address: {\tt matthieu.brachet@math.univ-metz.fr}\\
E-mail address: {\tt Jean-Paul.Chehab@u-picardie.fr}\\
\end{document}